\newcommand{\mc}{\mathcal}
\newcommand{\msf}{\mathsf}
\newcommand{\mbf}{\boldsymbol} 
\newcommand{\mscr}{\mathscr}
\newtheorem{thrm}{Theorem}[section] 
\newtheorem{lem}[thrm]{Lemma} 
\newtheorem{prop}[thrm]{Proposition} 
\newtheorem{cor}[thrm]{Corollary} 
\theoremstyle{definition}
\newtheorem{example}[thrm]{Example} 
\newtheorem{defn}[thrm]{Definition}
\theoremstyle{remark} 
\newtheorem{rem}[thrm]{Remark}
\numberwithin{equation}{section}
\newcommand{\C}{ \ensuremath{\mathbb{C}}}
\newcommand{\R}{ \ensuremath{\mathbb{R}}}
\newcommand{\N}{ \ensuremath{\mathbb{N}}}
\newcommand{\Z}{ \ensuremath{\mathbb{Z}}}
\newcommand{\D}{ \ensuremath{\mathbb{D}}}  
\newcommand{\al}{\alpha}
\newcommand{\eps}{\varepsilon}
\newcommand{\lam}{\lambda}
\renewcommand{\phi}{\varphi}
\newcommand{\norm}[1]{ \ensuremath{\left\lVert{#1}\right\rVert}}
\newcommand{\twonorm}[1]{ \ensuremath{\left\lVert{#1}\right\rVert}_{2}}
\newcommand{\infnorm}[1]{ \ensuremath{\left\lVert{#1}\right\rVert}_{\infty}}
\newcommand{\abs}[1]{ \ensuremath{\left\lvert{#1}\right\rvert}}
\newcommand{\inner}[2]{\ensuremath{\left\langle #1, #2 \right\rangle}}
\newcommand{\set}[1]{\left\{ #1 \right\}}
\newcommand{\setc}[2]{\left\{ #1 \mathrel{}\middle| \mathrel{} #2 \right\} }
\DeclareMathOperator{\lin}{lin} 
\newcommand{\ind}[1]{\chi_{#1}}  
\newcommand{\F}{\mathscr{F}}
\newcommand{\B}{\mathscr{B}}
\newcommand{\X}{\mscr{X}}
\newcommand{\Image}{\mathrm{Im}}  
\newcommand{\bhardy}{\ensuremath{H_{\mathscr B}^{2}} }  
\begin{document}

\title[GLA for scalar-type operators]{Construction of eigenfunctions for scalar-type operators via Laplace averages with connections to the Koopman operator}


\author{Ryan Mohr}
\address{Center for Control, Dynamical Systems, and Computation, University of California, Santa Barbara\\ Santa Barbara, CA, USA}
\email{mohrrm@engr.ucsb.edu}
\thanks{The authors gratefully acknowledge support from both the Army Research Office (ARO) through grant W911NF-11-1-0511 under the direction of program manager Dr.\ Sam Stanton and the Air Force Office of Scientific Research (AFOSR) through grant FA9550-09-1-0141 under the direction of program manager Dr.\ Bob Bonneau.}

\author{Igor Mezi\'c}
\address{Center for Control, Dynamical Systems, and Computation \& Department of Mathematics, University of California, Santa Barbara\\ Santa Barbara, CA, USA}
\email{mezic@engr.ucsb.edu}

\subjclass[2010]{Primary 47A35; Secondary 47B33, 47B40}

\keywords{Laplace averages, composition operator, Koopman operator, spectral measures, off-attractor ergodic theorems, polynomials over normed commutative rings, dynamical systems}

\date{}

\dedicatory{}

\commby{}

\begin{abstract}
This paper extends Yosida's mean ergodic theorem in order to compute projections onto non-unitary eigenspaces for spectral operators of scalar-type on locally convex linear topological spaces.  For spectral operators with dominating point spectrum, the projections take the form of Laplace averages, which are a generalization of the Fourier averages used  when the spectrum is unitary.  Inverse iteration and Laplace averages project onto eigenspaces of spectral operators with minimal point spectrum.  Two classes of dynamical systems --- attracting fixed points in $\mathbb{C}^{d}$ and attracting limit cycles in $\mathbb{R}^{2}$ --- and their respective spaces of observables are given for which the associated composition operator is spectral.  It is shown that the natural spaces of observables are completions with an $\ell^{2}$ polynomial norm of a space of polynomials over a normed unital commutative ring.  These spaces are generalizations of the Hardy spaces $H^{2}(\mathbb{D})$ and $H^{2}(\mathbb{D}^{d})$.  Elements of the ring are observables defined on the attractor --- the fixed point or the limit cycle, in our examples.  Furthermore, we are able to provide a (semi)global spectral theorem for the composition operators associated with a large class of dissipative nonlinear dynamical systems; any sufficiently smooth dynamical system topologically conjugate to either of the two cases above admits an observable space on which the associated Koopman operator is spectral.  It is conjectured that this is generically true for systems where the basin of attraction can be properly ``coordinatized''.
\end{abstract}

\maketitle



\section{Introduction}\label{sec:intro}

Consider a discrete time, dynamical system $(\X,\Phi)$, where $\X$ is some finite dimensional normed space and $\Phi : \X \to \X$ is the generator of the flow map.  When $\Phi$ is linear and (block) diagonalizable, everything is known about the behavior of the system due the spectral nature of the operator.  Unfortunately, such a global decomposition is not generally available in the nonlinear case and the analysis proceeds with the geometric methods introduced by Poincar\'{e}.  Spectral methods that are used in this framework are typically local in nature and revolve around perturbing about particular solutions and looking at the spectrum of the linear part of the perturbed dynamics.  

However, by changing our viewpoint from the geometric picture to a functional/analytic one, a global spectral theory can often be pursued for the nonlinear case.  This is accomplished by considering composition operators acting on a space of observables $\F$ (scalar functions having domain $\X$).  To the map $x\mapsto \Phi(x)$, we associate the (formally defined) operator $U_{\Phi}$
	\begin{equation}\label{eq:composition-operator}
	f \mapsto U_{\Phi}f := f \circ \Phi .
	\end{equation}
The operator $U_{\Phi}$ is often called the Koopman operator associated with $\Phi$ due to B.O. Koopman's work with operators of this form on Hamiltonian systems \cite{Koopman:1931ug}.  We will use ``Koopman operator'' and ``composition operator'' interchangeably.  $\Phi$ is called the symbol of the operator.  Questions of what $\F$ should be and how the properties of $U_{\Phi}$ relate to properties of the dynamical system are of the most interest.  If $\F$ has a linear structure and is closed under the operation \eqref{eq:composition-operator}, the composition operator is a linear operator, albeit infinite dimensional in general.  As such, a spectral theory can be pursued.  For example, when $\Phi$ is an automorphism on the compact metric space $\X$ preserving a finite measure $\mu$ and $\F$ is taken as the Hilbert space of observable $L^{2}(\X,\mu)$, $U_{\Phi}$ is a unitary operator on a Hilbert pace and thus has a spectral decomposition \cite{Bachman:2000ul}.  For a more general system though, the structure of $\F$ and whether the operator is spectral (either, completely ``diagonalizable'' or in the sense of Dunford) is a more difficult question to answer.

Spectral properties of composition operators have proven useful in analyzing concrete systems.  By constructing eigenfunctions, either explicitly or implicitly, the practitioner has gained insight into the behavior of their particular system.  Typically, rather than a full spectral decomposition, a so-called Koopman mode analysis is pursued \cite{Mezic:2005ug}.  This amounts to considering the spectral decomposition from a specific initial condition.  This is done in the fluids literature via the DMD algorithm \cite{Schmid:2008wv,Schmid:2010ba} (whose connection to the Koopman operator was given in \cite{Rowley:2009ez}) and its utility has been born out in a number of studies; e.g. see \cite{Rowley:2009ez,Schmid:2011ve,Seena:2011ft} or the recent review article \cite{Mezic:2012tx}, and the references therein, for a subset.  Other applied studies, such as energy efficiency in buildings \cite{Georgescu:2012tt,Eisenhower:2010tv}, power-systems analysis \cite{Susuki:2012gk,Susuki:2011ef}, and neurodynamics \cite{Mauroy:2012by} have used spectral properties and the Koopman mode decomposition of the associated composition operator.

In most of these studies it is implicitly assumed that the dynamics considered are on the attractor and, therefore, the eigenfunctions corresponding to unitary (unit modulus) eigenvalues are important.  A general procedure for constructing such eigenfunctions is through the use of infinite Fourier averages
	\begin{equation}\label{eq:fourier-average}
	\lim_{n \to \infty} n^{-1} \sum_{k=0}^{n-1} e^{-i\omega k} U_{\Phi}^{k}, \quad (\omega \in \R).
	\end{equation}
The existence of these limits can be guaranteed in certain situations by various ergodic theorems, Wiener and Wintner's extension of Birkhoff's pointwise ergodic theorem \cite{Wiener:1941wy,Assani:2003us} and Yosida's extension of the mean ergodic theorems \cite{Yosida:1978ul} having primacy.

Often though, it is quite useful to know the spectrum and eigenfunctions for non-measure preserving dynamics.  For example, the recent work in neurodynamics on isochrons and isostables \cite{Mauroy:2013jb} leverage the non-unitary spectrum of the Koopman operator.  The works of Cowen, MacCluer, and others on the spectra of composition operators with analytic symbols acting on (weighted) Hardy Hilbert spaces $H^{2}(\D)$ or Bergman spaces $A^{p}(\D)$ in the complex unit disc (or more generally, the polydisc) can be considered as other examples \cite{Cowen:1994cw,Singh:1993vt,Ridge:1973ud,Galindo:2009kc,Aron:2004kr,Hyvarinen:2013hb}.  Unfortunately, it is not immediately clear that such dynamical systems arise naturally when considering applied problems and, additionally, there seems not to be a clear general procedure on constructing non-unitary eigenfunctions as there is for unitary eigenfunctions through the application of \eqref{eq:fourier-average}.  

This paper and its companion aim to partially fill this gap.  In pursuing this goal, we first prove results in an abstract setting of spectral operators of scalar type acting on locally convex linear topological spaces\footnote{In the companion paper, we consider spectral operators of finite type (in the sense of Dunford) and more general forms of averages.} and then specialize to composition operators associated to particular dynamical systems.  In this paper, our main results include a general method for constructing eigenfunctions for scalar-type spectral operators having non-unitary eigenvalues and, additionally, a method for constructing spaces of observables corresponding to a dynamical system for which the associated Koopman operator is spectral.  Analogous to the viewpoint that the Laplace transform is a generalization of the Fourier transform, Laplace averages (to be defined later) will take the place of the Fourier averages of \eqref{eq:fourier-average} in the construction of the projection operators.  We do mention that the generalized Laplace averages of this paper also appear in \cite{Mauroy:2013jb}; however, in that work, a rigorous analysis of when and on what spaces these averages exist is missing.  In the course of proving our results, it will become clear that there must be restrictions on the placement of the point spectrum of the operator and the observables to which the Laplace averages are applied.  Later on, it will be shown that such restrictions are naturally satisfied for certain composition operators and their associated dynamical systems.  We do this by explicitly constructing the space of observables for two basic types of attractors important in applied problems --- fixed points and limit cycles --- and showing the associated composition operator is spectral in a function space defined on bounded, positively invariant, simply connected subsets in the basin of attraction.  The eigenfunctions that naturally arise for these systems live in the space of polynomials over normed unital commutative rings; that is, polynomials where the coefficients of the indeterminates are elements from a commutative ring\footnote{This is not to be confused with the space of Banach algebra polynomials which is formed from the space of polynomials over $\C$ whose indeterminates are replaced by elements from a Banach algebra.}.  We can complete these spaces under an $\ell^{2}$ polynomial norm.  We connect these completions back to Hardy spaces in the disc, which have been the focus of other work on composition operators, for example the work of Cowen and MacCluer \cite{Cowen:1994cw}.  In particular, the spaces of observables we construct are generalizations of the Hilbert Hardy spaces in the unit disc or polydisc.  These comments are contained in remarks \ref{rem:hardy-space} and \ref{rem:connection-with-work-of-cowen}.

This paper is structured as follows.  Section \ref{sec:preliminaries} states and discusses Yosida's mean ergodic theorem in order to motivate the assumptions on the spectrum that will be required to prove our results.  The section also defines and gives basic properties of spectral measures in locally convex linear topological spaces which are the second main element in our analysis.

Section \ref{sec:gla-lclts} contains our main results on constructing eigenfunctions for non-unitary eigenvalues of spectral operators.  For operators with dominating point spectrum (definition \ref{defn:dominating-point-spectrum}), the Generalized Laplace analysis (GLA) theorem (theorem \ref{eq:gla-lcs-forward}) is the main result.  It gives an iterative procedure to compute projection onto eigenspaces.  In essence, one needs to project onto eigenfunctions corresponding to the peripheral point spectrum first, subtract these out from the dynamics, and then repeat on the reduced system.  Conceptually, this is similar to applying a perturbation to the system that moves all the peripheral eigenvalues to (or close to) 0.  However, for many systems of interest, the point spectrum is not dominating.  The stable directions correspond to eigenvalues inside the unit circle, whereas the spectrum corresponding to the attractor is contained in the unit circle and may posses continuous parts.  In order to leverage the already proven GLA results, we use inverse iteration.  This transforms the spectrum into the previous case of dominating point spectrum.  This result is contained in theorem \ref{prop:gla-inverse-iteration}.

Section \ref{sec:dyn-sys} connects the abstract results on general spectral operators with dynamical systems and the Koopman operator.  Polynomials over normed commutative rings are defined first since they arise naturally from eigenfunctions of the Koopman operator.  It is also shown that under a completion with an $\ell^{2}$ polynomial norm that the completed space is locally sequentially weakly compact as is required to apply the GLA theorems.  This holds true as long as the Banach space in which the ring is dense is separable and reflexive.  This result is contained in proposition \ref{prop:lswc}.  Section \ref{sec:examples} constructs eigenfunctions and spaces of observables for attracting hyperbolic fixed points in $\C^{d}$ and limit cycles in $\R^{2}$.  Topological conjugacies are leveraged to use eigenfunctions corresponding to the linearized dynamics to construct eigenfunctions and spaces of observables for the nonlinear system.  It is shown that these spaces are identifiable with spaces of polynomials over normed unital commutative rings.  Spectral measures for the systems are constructed that satisfy the properties assumed in section \ref{sec:preliminaries}.  Additionally, a short remark connects the space of observables of an attracting fixed point for the $d=1$ case with Hardy Hilbert spaces in the unit disc $H^{2}(\D)$ that are often studied in the context of composition operators with analytic symbols defined in the unit disc.  Loosely, the spaces we construct can be viewed as generalizations of these spaces.

Some concluding remarks are given in section \ref{sec:conclusions} and some future directions are pointed out.

\section{Preliminaries}\label{sec:preliminaries}

\subsection{Yosida's Mean Ergodic theorem.}

Let $\F$ be a complex vector space.  Let $\Gamma$ be an index set and $\set{p_{\gamma} : \F \to \R}_{\gamma \in \Gamma}$ a separating family of seminorms.  Recall, that together these mean that for all $\gamma \in \Gamma$, scalars $\al$, and $f,g \in \F$ that
\begin{inparaenum}[(i)]
\item $p_{\gamma}(f) \geq 0$, \label{item:nonnegative}
\item $p_{\gamma}(\al f) = \abs{\al} p_{\gamma}(f)$, \label{item:homogeneity}
\item $p_{\gamma}(f + g) \leq p_{\gamma}(f) + p_{\gamma}(g)$, and \label{item:subadditivity}
\item for every $f \neq 0$, there is a $p_{\gamma(f)} \in \Gamma$ such that $p_{\gamma(f)}(f) \neq 0$.
\end{inparaenum}

Define on $\F$ the locally convex topology (LCT) generated by the seminorms.  Neighborhood bases of the LCT have the form $U(f; \eps) := \setc{ g \in \F }{ p_{\gamma_{i}}(g - f) < \eps, \forall i \in I }$, where $I \subset \Gamma$ is an arbitrary finite subset and $\eps$ is an arbitrary positive number.  Note that a sequence $\set{f_{n}} \subset \F$ converges to $f \in \F$ if and only if for all $\gamma \in \Gamma$, $p_{\gamma}(f_{n} - f) \to 0$ as $n\to \infty$.  This topology will be called the strong topology on $\F$ and with this topology $\F$ is a locally convex linear topological space (LCLTS).

Denote by $\mscr L(\F)$ the set of all linear operators on $\F$ continuous with respect to the strong topology.  The set $\F^{*}$, the topological dual to $\F$, is the space of all linear functionals continuous with respect to the locally convex topology.  The weak topology on $\F$, denote by $\sigma(\F, \F^{*})$ is generated by the family of seminorms $\setc{ p_{\phi} = \abs{\phi} }{ \phi \in \F^{*} }$.

Fix $U \in \mscr L(\F)$.  The resolvent $\rho(U) \subset \C$ of $U$ is the set of $\xi \in \C$ such that $\Image(\xi I - U)$ is dense in $\F$ and $(\xi I - U)^{-1}$ exists and is continuous.  The complement of this set is the spectrum, denoted $\sigma(U)$.  The point spectrum $\sigma_{p}(U)$ is the set of $\lam \in \sigma(U)$ for which $(\lam I - U)$ does not have an inverse. When the operator $U$ is understood, it will be dropped from the notation.  We denote for $\Lambda \subset \sigma$
	\begin{equation}\label{eq:spectral-modulus}
	\abs{\Lambda} := \setc{ \abs{\lam} }{\lam \in \Lambda}
	\end{equation}
and the spectral radius of $\Lambda$ as
	\begin{equation}\label{eq:spectral-radius}
	\infnorm{\Lambda} := \sup_{\lam \in \Lambda} \abs{\lam}.
	\end{equation}

\begin{defn}[Equicontinuous family of linear operators]
Consider a fixed linear operator $U\in \mscr L(\F)$.  The family $\set{ U^{n} }_{n \in \N}$ is called equicontinuous if for every continuous (with respect to the LCT) seminorm $p:\F \to \R$, there is a continuous seminorm $p':\F \to \R$ such that
	\begin{equation}\label{eq:equicontinuity}
	\sup_{n \geq 1} p(U^{n} f) \leq p'(f).
	\end{equation}
for every $f \in \F$.
\end{defn}

\begin{defn}[Averaging operators]
For each $n \in \N$, define the averaging operator $A_{n} : \mscr L(\F) \to \mscr L(\F)$ as 
	\begin{equation}
	A_{n}(U) := n^{-1} \sum_{k=0}^{n-1} U^{k}.
	\end{equation}
\end{defn}


One of the major tools of this paper is Yosida's extension of the mean ergodic theorem.

\begin{thrm}[Yosida's mean ergodic theorem, \cite{Yosida:1978ul}]
Let $\set{U^{n}}_{n\in\N}$ be an equicontinuous family of linear operators defined on a locally convex linear topological space $\F$.  Fix $\psi \in \F$.  If there exists as subsequence $\set{ n_{i} } \subset \N$ and an $\psi_{0} \in \F$ such that $\displaystyle \lim_{i\to\infty} \inner{ A_{n_{i}}(U) \psi - \psi_{0}}{ \phi^{*} } = 0$ for all $\phi^{*} \in \F^{*}$, then $U\psi_{0} = \psi_{0}$ and $\displaystyle \lim_{n\to \infty} A_{n}(U) \psi = \psi_{0}$ exists in the strong topology.

Futhermore, if $\F$ is locally sequentially weakly compact, then $\displaystyle P := \lim_{n\to \infty} A_{n}(U)$, with the limit taken in the strong operator topology, defines a continuous projection operator onto $N(I - U)$ commuting with $U$ and giving the direct sum decomposition $\F = N(I - U) \oplus \overline{\Image(I - U)}$.
\end{thrm}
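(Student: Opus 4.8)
The plan is to reduce everything to the single telescoping identity
\begin{equation*}
(I - U)A_{n}(U) = A_{n}(U)(I-U) = \tfrac{1}{n}\bigl(I - U^{n}\bigr),
\end{equation*}
obtained by collapsing $\sum_{k=0}^{n-1}(U^{k} - U^{k+1})$, and to play it off against the equicontinuity hypothesis in two complementary ways. First I would prove the fixed-point assertion $U\psi_{0}=\psi_{0}$. Applying the identity to $\psi$ along the subsequence gives $(I-U)A_{n_{i}}(U)\psi = \tfrac{1}{n_{i}}(\psi - U^{n_{i}}\psi)$. Equicontinuity bounds $\{U^{n_{i}}\psi\}$ by a single continuous seminorm, so the right-hand side tends to $0$ in the strong topology, hence weakly. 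Since any strongly continuous operator is weakly continuous, $(I-U)A_{n_{i}}(U)\psi \to (I-U)\psi_{0}$ weakly, and comparing the two limits forces $(I-U)\psi_{0}=0$.

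Next I would locate the ``transient'' part $\eta := \psi - \psi_{0}$ inside $\overline{\Image(I-U)}$. Factoring $I - U^{k} = (I-U)(I + U + \dots + U^{k-1})$ shows $\eta - A_{n}(U)\eta \in \Image(I-U)$ for every $n$, since it is a finite sum of elements of that subspace. Using $A_{n_{i}}(U)\psi_{0}=\psi_{0}$, the hypothesis gives $\eta - A_{n_{i}}(U)\eta \to \eta$ weakly, exhibiting $\eta$ as a weak limit of points of the convex set $\Image(I-U)$. By Mazur's theorem the weak and strong closures of a convex set coincide, so $\eta \in \overline{\Image(I-U)}$ (strong closure). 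To finish Part~1 I would note that equicontinuity of $\{U^{n}\}$ propagates to the averages, $p(A_{n}(U)f) \le \tfrac{1}{n}\sum_{k} p(U^{k}f) \le p'(f)$, so $\{A_{n}(U)\}$ is equicontinuous; on the generating set $g=(I-U)h$ the identity yields $A_{n}(U)g = \tfrac{1}{n}(I-U^{n})h \to 0$ strongly, and an $\eps/2$ approximation in the seminorm $p'$ extends this to all of $\overline{\Image(I-U)}$. Hence $A_{n}(U)\eta \to 0$ and $A_{n}(U)\psi \to \psi_{0}$ strongly.

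For the ``furthermore'' clause I would use local sequential weak compactness to produce the hypothesis of Part~1 for \emph{every} $\psi$: the equicontinuity bound shows $\{A_{n}(U)\psi\}$ is bounded, so it has a weakly convergent subsequence, and Part~1 then gives a strong limit $P\psi := \lim_{n} A_{n}(U)\psi$ with $UP\psi = P\psi$. Linearity of $P$ is inherited from the $A_{n}(U)$, and continuity follows because $p(P\psi) = \lim_{n} p(A_{n}(U)\psi) \le p'(\psi)$. Since $P\psi$ is fixed by $U$, the averages act trivially on it, giving $P^{2}=P$; conversely any $f \in N(I-U)$ satisfies $A_{n}(U)f=f$, so $\Image P = N(I-U)$. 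Commutation $A_{n}(U)U = UA_{n}(U)$ passes to the limit (using continuity of $U$) to give $PU=UP$. Finally, $\psi - P\psi \in \overline{\Image(I-U)}$ by the second step, while $A_{n}(U)$ annihilates that subspace, so $N(P)=\overline{\Image(I-U)}$, and the projection $P$ delivers the splitting $\F = N(I-U) \oplus \overline{\Image(I-U)}$.

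The hard part will be the passage from weak subsequential convergence to strong convergence of the full sequence: this is exactly where the convexity (Mazur) argument and the equicontinuity of the averages must be combined, and where the locally convex setting requires care, since one must argue seminorm by seminorm rather than with a single norm. The algebraic identities and the verification of the projection properties are routine once that bridge is in place.
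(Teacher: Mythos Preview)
The paper does not supply its own proof of this theorem; it is quoted from Yosida's book and used as a black box, with the subsequent remarks only discussing its consequences.  Your argument is correct and is essentially the classical proof: the telescoping identity $(I-U)A_{n}(U)=n^{-1}(I-U^{n})$ forces $U\psi_{0}=\psi_{0}$, Mazur's theorem places $\psi-\psi_{0}$ in the strong closure of $\Image(I-U)$, and equicontinuity of the averages drives $A_{n}(U)$ to zero on that subspace.

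One small point deserves an extra sentence in the locally convex setting: when you invoke local sequential weak compactness, the step ``$\{A_{n}(U)\psi\}$ is bounded, so it has a weakly convergent subsequence'' needs the observation that a bounded set is absorbed by any neighborhood of $0$, hence the sequence sits inside a scalar multiple of the sequentially weakly compact neighborhood furnished by the hypothesis (and scalar multiples of such sets remain sequentially weakly compact).  This is routine, but since you flagged the seminorm-by-seminorm care as the delicate part, it is worth making explicit.
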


\subsubsection{Remarks on Yosida's mean ergodic theorem.} 

\begin{compactenum}[(i)]
\item The theorem states that if the averages of an element have a weakly convergent subsequence, this implies that the full average converges strongly to the limit.

\item The mean ergodic theorem applies, in particular, to when $\F$ is a Banach space as this is just a specialized example of a LCLTS. 

\item Note that the first result does not define a projection operator on $\F$ since some $\psi$ may not have a weakly convergent subsequence of averages.  To define the spectral projection and get a direct sum decomposition of the space, every $\psi \in \F$ must have some subsequence of $\set{A_{n}(U) \psi}$ that is weakly convergent.  This is guaranteed by $\F$ being locally sequentially weakly compact.  In particular, if $\F$ is a separable reflexive Banach space, it is locally sequentially weakly compact.  This follows directly from the sequential version of the Banach-Alaoglu theorem.

\item If $\abs{ \lam } = 1$ and $\set{ U^{n} }$ is equicontinuous, then $\set{ \lam^{-n}U^{n} }_{n\in\N}$ is also equicontinuous.  If the sequence $\{ A_{n}(\lam^{-1}U) \psi \}_{n\in\N}$ has a weakly convergent subsequence, then 
	\begin{equation*}
	\lim_{n\to \infty}  A_{n}(\lam^{-1}U) \psi  = \psi_{\lam} \in \F
	\end{equation*}
exists, $\lam^{-1} U \psi_{\lam} = \psi_{\lam}$, and hence $\psi_{\lam}$ is an element of the eigenspace at $\lam$.

\item\label{item:no-limits-from-below} Suppose $\F$ is a Banach space and suppose $\set{ U^{n} }_{n\in\N}$ ($U \in \mscr L(\F)$) is not equicontinuous in the norm (e.g. $1 < \norm{U} < \infty$).  We always have the spectral radius as a lower bound on the norm of $U$:
	\begin{equation*}
	\infnorm{ \sigma(U) } := \sup_{\lam \in \sigma(U) } \abs{\lam} \leq \norm{U}.
	\end{equation*}
To get an element of an eigenspace using the above results, there must be some $\mu \in \sigma(U)$ such that $ \set{ \left(\mu^{-1}U \right)^{n}}$ is equicontinuous.  This happens only if $\abs{\mu} = \norm{U}$.

In particular, suppose $\sigma(U)$ has a sequence of eigenvalues $\set{ \lam_{j} }$ such that $\set{ \abs{ \lam_{j} } }$ is a strictly increasing sequence, $\abs{\lam_{j}} < \norm{U}$, and $\displaystyle \lim_{j\to\infty} \abs{\lam_{j}} = \norm{U}$.  For $\lam_{j}$, let $\phi \in N(\lam_{j+1} I - U)$ and $\norm{\phi} = 1$.  Then
	\begin{equation*}
	\norm{ \left( \lam_{j}^{-1} U \right)^{n} } \geq \norm{ \left( \lam_{j}^{-1} U \right)^{n} \phi } = \abs{{\lam_{j+1}} /{\lam_{j}}}^{n}
	\end{equation*}
Therefore $\set{\left( \lam_{j}^{-1} U \right)^{n}}$ cannot be an equicontinuous family.

\item\label{item:no-large-continuous-spectrum} Suppose $\norm{U} \leq 1$ and $\F$ is a locally sequentially weakly compact Banach space.  Suppose $\lam_{1,2}$ are in the point spectrum $\sigma_{p}(U)$ and satisfy $\abs{\lam_{2}} < \abs{\lam_{1}} = \norm{U}$.  Also assume that $U$ has a continuous portion of the spectrum, $\mu \in \sigma_{c}(U)$, and $\abs{\lam_{2}} < \abs{\mu} < \abs{\lam_{1}} = \norm{U}$.  The above results can be applied to any $f \in \F$ using $\lam_{1}^{-1}U$ to get a projection onto $N(\lam_{1} - U)$.  

We would also like to use the same procedure to get the projections onto $N(\lam_{2} I - U)$.  Recall that we had the direct sum decomposition $\F = \overline{\Image(\lam_{1} I - U)} \oplus N(\lam_{1} I - U)$.  Consider elements in $\overline{\Image(\lam_{1} I - U)}$ and restrict $U$ to this subspace (which we will denote as $U_{2}$).  We cannot use the averaging above to get the projection since the continuous part of the spectrum prevents $\set{(\lam_{2}^{-1} U_{2})^{n}}$ from being equicontinuous.  This is because the norm of $U_{2}$ satisfies $\abs{\lam_{2}} < \abs{\mu} \leq \norm{U_{2}}$.  Therefore, if we want to apply the averaging procedure to compute the projections, $U$ must either be restricted to subspaces not including elements corresponding to non-point spectrum or all elements of the point spectrum must have greater modulus than other parts of the spectrum.  Additionally, if $\abs{ \sigma_{p}(U) }$ is a discrete set, then we can guarantee that some eigenvalue's modulus attains the spectral radius.  This condition means that the point spectrum is contained in separated circles in the complex plane.
\end{compactenum}

Given these remarks, in order to apply the mean ergodic theorem in an iterative manner, some assumptions on the spectrum must be made.  Loosely, the point spectrum must be larger than the non-point spectrum and additionally, whenever any set of eigenvalues is removed from the spectrum, there is an eigenvalue that achieves the supremum over the remaining set (if the remaining set contains an eigenvalue).  To this end we define the following objects.  The first definition is standard.

\begin{defn}[Peripheral spectrum]
$\lam \in \sigma(U)$ is in the peripheral spectrum of $U$ if the modulus of $\lam$ is equal to the spectral radius;  $\displaystyle \abs{\lam} = \infnorm{ \sigma(U) }$.  If $B \subset \C$, then the peripheral spectrum of $\sigma(U) \cap B$ is defined as $\setc{ \lam \in \sigma(U) \cap B }{ \abs{\lam} = \infnorm{\sigma(U) \cap B} }$.
\end{defn}

\begin{defn}[Dominating point spectrum]\label{defn:dominating-point-spectrum}
For $r > 0$, let $\D_{r}$ be the open disc of radius $r$ centered at 0 in the complex plane and let $\sigma(U; \D_{r}) := \D_{r} \cap \sigma(U)$.  If there exists an $R > 0$ such that $\sigma(U) \setminus \D_{R}$ is not empty and for every $r > R$ we have:
\begin{compactenum}[(i)]
\item if $\sigma(U; \D_{r}) \cap \sigma_{p}(U) \neq \emptyset$, then the peripheral spectrum of $\sigma(U;\D_{r})$ is not empty, and
\item the set $\sigma(U) \setminus \D_{r}$ consists only of eigenvalues, \label{item:peripheral-eigenvalues}
\end{compactenum}
then $U$ is said to have dominating point spectrum.
\end{defn}

We note that the second condition of this definition implies that the peripheral spectrum of $\sigma(U; \D_{r})$ consists only of eigenvalues if it is not empty.  Basic properties of a dominating point spectrum are given in the following lemma.

\begin{lem}\label{lem:dominating-spectrum-properties}
Assume $U\in \mscr L(\F)$ has a dominating point spectrum.  Then 
\begin{compactenum}[(i)] 
\item every pair $\lam \in \sigma_{p}$ and $\xi \in \sigma \setminus \sigma_{p}$ with $\abs{\lam} > R$ satisfies $\abs{\xi} \leq \abs{\lam}$,
\item\label{item:no-increasing-limit} no sequence of eigenvalues $\set{\lam_{n}}_{n\in\N} \subset \sigma_{p}$ has both strictly increasing moduli and satisfies ``$\displaystyle \lim_{n \to \infty} \abs{\lam_{n}}$ exists'', and
\item\label{item:dominating-subset} if $\Lambda\subset \sigma_{p}$ is any set of eigenvalues, then $\sigma \setminus \Lambda$ has dominating point spectrum.
\end{compactenum}
\end{lem}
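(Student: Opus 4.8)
The plan is to handle the three assertions in order, establishing (i) first, using it to obtain (ii), and then combining both to prove the closure property (iii), which I expect to be where the real work lies.

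For part (i) I would first show that the non-point spectrum is trapped inside $\overline{\D_R}$. Indeed, if some $\xi \in \sigma \setminus \sigma_{p}$ had $\abs{\xi} > R$, I could pick a radius $r$ with $R < r < \abs{\xi}$; then $\xi \in \sigma \setminus \D_{r}$, which by the second condition of Definition~\ref{defn:dominating-point-spectrum} consists only of eigenvalues, contradicting $\xi \notin \sigma_{p}$. Hence $\abs{\xi} \le R$ for every $\xi \in \sigma \setminus \sigma_{p}$, and for any $\lam \in \sigma_{p}$ with $\abs{\lam} > R$ the inequality $\abs{\xi} \le R < \abs{\lam}$ gives the claim at once.

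For part (ii) I would argue by contradiction. Given a sequence $\set{\lam_{n}} \subset \sigma_{p}$ with strictly increasing moduli and $L := \lim_{n} \abs{\lam_{n}}$ existing, each term satisfies $\abs{\lam_{n}} < L$ and so lies in the open disc, i.e.\ $\lam_{n} \in \sigma(U;\D_{L}) \cap \sigma_{p}$. I would then note that $\infnorm{\sigma(U;\D_{L})} = L$: it is at least $\sup_{n}\abs{\lam_{n}} = L$, and at most $L$ because every point of $\D_{L}$ has modulus strictly below $L$. For that same reason no point of $\sigma(U;\D_{L})$ can have modulus exactly $L$, so the peripheral spectrum of $\sigma(U;\D_{L})$ is empty even though the set meets $\sigma_{p}$, which contradicts the first condition of Definition~\ref{defn:dominating-point-spectrum} applied at $r = L$. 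The point to watch is that this condition is only available for $r > R$, so the argument delivers the statement precisely for accumulation in the dominating region $\abs{\cdot} > R$; I would make this hypothesis explicit.

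Finally, for part (iii) I would produce a threshold $R'$ witnessing that $\sigma' := \sigma \setminus \Lambda$, read as a spectrum with point part $\sigma_{p} \setminus \Lambda$ and unchanged non-point part, is again dominating. Two inheritances are routine: the non-point spectrum is untouched and, by part (i), still lies in $\overline{\D_R}$; and the second condition is automatic for every $r > R$ since $\sigma' \setminus \D_{r} \subseteq \sigma \setminus \D_{r}$ is a set of eigenvalues. The substance is the first condition: for $r > R'$ with $\sigma' \cap \D_{r}$ meeting $\sigma_{p}\setminus\Lambda$, I must show $\rho' := \infnorm{\sigma' \cap \D_{r}}$ is attained in $\sigma'$. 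When $\rho' > R$ this follows from part (ii), since non-attainment would produce spectral points of modulus exceeding $R$ — hence eigenvalues, by part (i) — with strictly increasing moduli tending to $\rho'$, which part (ii) forbids. The delicate point, and what I expect to be the main obstacle, is the bookkeeping needed to choose $R'$ so that the residual case $\rho' \le R$ never carries point spectrum with a non-attained supremum: deleting a limit-point eigenvalue can in principle strip the peripheral spectrum of a sub-disc, so $R'$ must be taken large enough to sit above every such accumulation level while still leaving $\sigma' \setminus \D_{R'}$ nonempty. I would organize this by cases according to whether eigenvalues of modulus exceeding $R$ survive in $\sigma'$, taking $R'$ just below the lowest-modulus surviving dominating eigenvalue in the first case and at the (attained) spectral radius of $\sigma'$ in the second, leaning on parts (i) and (ii) to exclude the non-attained configurations in each.
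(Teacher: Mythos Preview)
Your arguments for (i) and (ii) match the paper's essentially verbatim, including the observation that the contradiction in (ii) requires the accumulation modulus to exceed $R$; the paper's proof of (ii) uses the same restriction without flagging it.

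For (iii) the approaches diverge. The paper argues directly: fixing $r$, it lets $\alpha$ be the supremum of $\abs{\lam}$ over eigenvalues in $(\sigma\setminus\Lambda)\cap\D_{r}$, chooses a monotone maximizing sequence of eigenvalues, and invokes (ii) to conclude that the sequence of moduli cannot be strictly increasing with a limit and hence must be eventually constant, so $\alpha$ is attained by an eigenvalue. The second defining condition is then inherited from $\sigma\setminus\D_{r}\supset(\sigma\setminus\Lambda)\setminus\D_{r}$. This is considerably shorter than your case analysis and avoids having to manufacture a new threshold $R'$. Your route is more cautious but the sketch is incomplete: ``just below the lowest-modulus surviving dominating eigenvalue'' is ill-defined when infinitely many eigenvalues of modulus greater than $R$ survive with moduli decreasing toward $R$, and the residual case $\rho'\le R$ is left as unspecified bookkeeping. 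The paper's maximizing-sequence argument sidesteps these choices, though at the cost of applying (ii) without the $r>R$ hypothesis you correctly identified; both proofs therefore share the same soft spot regarding eigenvalue suprema at or below $R$.
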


\begin{proof}
\begin{compactenum}[(i)]
\item Either $\abs{\xi} \leq \abs{\lam}$ or $\abs{\xi} > \abs{\lam} > R$. In the first case, this is what we are trying to prove, so assume the second case holds.  Then $\xi \in \sigma \setminus \D_{\abs{\lam}}$ which contradicts that this set contains only eigenvalues.  This contradiction gives the result.

\item  Consider the sequence $\set{ \lam_{n} }_{n\in \N} \subset \sigma_{p}$ satisfying $\abs{\lam_{n}} < \abs{\lam_{n+1}}$ for all $n\in \N$ and $\displaystyle \lim_{n\to \infty} \abs{\lam_{n}} = r$.  Then $\infnorm{ \sigma(U;\D_{r} ) } = r$ and no eigenvalue having modulus equal to $r$ is in $\sigma(U;\D_{r} )$ since $\D_{r}$ is open.  Therefore, the peripheral spectrum of $\sigma(U;\D_{r})$ contains no eigenvalues.  But, $\sigma(U;\D_{r}) \cap \sigma_{p}$ is not empty.  This contradicts the first condition in definition \ref{defn:dominating-point-spectrum}.

\item Assume that $\sigma_{p} \setminus \Lambda$ is not empty, since otherwise the conditions for a dominating point spectrum are trivially fulfilled.  Fix $r \in \R^{+}$.  If $(\sigma \setminus \Lambda) \cap \D_{r}$, contains no eigenvalues, then we need to do nothing.  Suppose, then, that this set does contain an eigenvalue.  Put $\al = \sup \abs{\lam}$, where the supremum is taken over all eigenvalues in $(\sigma \setminus \Lambda) \cap \D_{r}$, and let $\set{\lam_{n}}_{n\in\N}$ be an increasing, maximizing sequence of eigenvalues in this set; i.e. 
\begin{inparaenum}[(a)]
\item $\forall n\in \N$, $\abs{\lam_{n}} \leq \abs{\lam_{n+1}}$ and
\item $\displaystyle \lim_{n\to\infty} \abs{\lam_{n}} = \al$.
\end{inparaenum}
Since the sequence of moduli of eigenvalues has a limit, then by result (\ref{item:no-increasing-limit}) of this lemma, this sequence cannot be strictly increasing; there must some point $n\in \N$ such that $\abs{\lam_{n}} = \abs{\lam_{n+1}}$.  We claim that there is an $M \in \N$ such that $\abs{\lam_{M}} = \abs{\lam_{M+m}}$ for all $m \geq 1$.  Assume to the contrary.  Put $N_{1} = 1$.  Then there exists $m_{1} \geq 1$ such that $\abs{\lam_{N_{1}}} < \abs{ \lam_{N_{1}+m_{1}} }$.  Put $N_{2} = N_{1} + m_{1}$.  Suppose, we have chosen $N_{1}, \dots, N_{j}$ in this manner.  Then there exists $m_{j} \geq 1$ such that $\abs{\lam_{N_{j}} } < \abs{\lam_{N_{j} + m_{j}} }$.  Put $N_{j+1} = N_{j} + m_{j}$.  By induction, we have constructed a subsequence $\set{ \lam_{N_{j}} }_{j \in \N}$ that has strictly increasing modulus and $\displaystyle \lim_{j\to \infty} \abs{ \lam_{N_{j}}} = \al$.  This contradicts (\ref{item:no-increasing-limit}).  Therefore,  some tail of $\set{ \lam_{n} }$ has constant modulus (namely $\al$).  Therefore, a peripheral eigenvalue exists.  Finally, $(\sigma \setminus \Lambda) \setminus \D_{r}$ consists only of eigenvalues since $(\sigma \setminus \Lambda) \setminus \D_{r} \subset \sigma \setminus \D_{r}$ and $\sigma \setminus \D_{r}$ only contains eigenvalues.
\end{compactenum}
\end{proof}

In particular, assuming that $U$ has a dominating point spectrum guarantees that situation in remark \eqref{item:no-large-continuous-spectrum} on the mean ergodic theorem does not occur.  Result \eqref{item:no-increasing-limit} of lemma \ref{lem:dominating-spectrum-properties} rules out the case in remark \eqref{item:no-limits-from-below} on the mean ergodic theorem.

\subsection{Spectral measures in locally convex linear topological spaces.}
As this paper is concerned with general procedures in constructing eigenfunctions for spectral operators of scalar type on LCLT spaces, we need the concept of a spectral measure for these spaces.  This machinery is contained in this section.  This information can be found in \cite{Walsh:1965vn} and \cite{Schaefer:1962dl}.

Let $(\F,\tau)$ be a locally convex linear topological space with topology $\tau$, $\mscr S$ is a set, and $\Sigma$ a $\sigma$-algebra of subsets of $\mscr S$.

\begin{defn}[Spectral measure triple \cite{Walsh:1965vn}]\label{defn:spectral-measure}
A triple $(\mscr S, \Sigma, \mu)$, where $\mu$ is a set function from $\Sigma$ to $\mscr L(\F)$ which is countable additive in the weak operator topology, that satisfies
\begin{compactenum}[(i)]
\item $\mu(\mscr S) = I \in \mscr L(\F)$ 
\item for $A_{1}$ and $A_{2}$ in $\Sigma$, $\mu(A_{1} \cap A_{2}) = \mu(A_{1}) \cdot \mu(A_{2}) \in \mscr L(\F)$
\end{compactenum}
is called a spectral measure triple.
\end{defn}

The spectral measure is said to be equicontinuous if the values in $\mscr L(\F)$ that $\mu$ takes on $\Sigma$ are equicontinuous.  If $(\mscr S, \Sigma, \mu)$ is an equicontinuous spectral measure triple and $\F$ is sequentially complete in its topology, the integrals of $\C$-valued, bounded, $\Sigma$-measurable functions can be defined \cite{Walsh:1965vn}.  The integral is multiplicative since for simple $f$ and $g$, $\int fg d\mu = \int f d\mu \cdot \int g d\mu$ follows from condition (2) in the definition of the spectral measure triple.  Hence the map $f \mapsto U_{f}$ from (equivalence classes) of $\C$-valued, bounded, $\Sigma$-measurable functions to $\mscr L(\F)$ is a homomorphism.  The integral gives a representation of the bounded $\C$-valued $\Sigma$-measurable functions as an algebra of linear operators on $\F$.  This algebra of operators is called the \emph{spectral algebra associated with $(\mscr S, \Sigma, \mu)$} and will be denoted by $\mscr A$.  Elements of this algebra are spectral operators and will be called spectral elements.

By a change of measure $E = \mu \circ f^{-1}$, we get the familiar representation of a scalar-type spectral operator (in the sense of Dunford) as an integral against a spectral measure with domain in $\mc B(\C)$;
	\begin{equation}\label{spectral-operator}
	U = \int_{\mscr S}  f(s) d\mu(s) = \int_{\C} z E(dz) .
	\end{equation}
The measure $E$ depends only on $U$ and its support is the spectrum of $U$.  The support of $E$ is contained in a compact set since $f$ is bounded by assumption.

In order to apply the mean ergodic theorem, we will need to guarantee that certain families of spectral operators are equicontinuous.  The most useful result in this regard is the following proposition, due to Walsh.

\begin{prop}[Proposition 2.1, Walsh \cite{Walsh:1965vn}]\label{prop:equicontinuous-family}
Let $\F$ be a locally convex space and $(\mscr S, \Sigma, \mu)$ an equicontinuous spectral measure triple.  Then the set
	\begin{equation}\label{eq:equicontinuous-family}
	\setc{ \int_{\mscr S} f d\mu }{ \text{$f$  $\Sigma$-measurable}, 0 \leq \abs{f} \leq 1 }
	\end{equation}
is an equicontinuous family.
\end{prop}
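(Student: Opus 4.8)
The plan is to convert the equicontinuity of the projection-valued measure $\{\mu(A)\}_{A\in\Sigma}$ directly into equicontinuity of the integral operators, the mechanism being a discrete layer-cake (Abel summation) identity that exhibits $\int g\,d\mu$ for a nonnegative simple $g$ as a \emph{convex} combination of the projections $\mu(A)$. Unwinding the definition, the family in \eqref{eq:equicontinuous-family} is equicontinuous precisely when, for every continuous seminorm $p$ on $\F$, there is a continuous seminorm $p'$ with $p\big((\int_{\mscr S} f\,d\mu)\,x\big)\le p'(x)$ for all admissible $f$ and all $x\in\F$. So I would fix a continuous seminorm $p$; by the assumed equicontinuity of $\mu$ there is a continuous seminorm $q$ with $p(\mu(A)x)\le q(x)$ for every $A\in\Sigma$ and every $x$. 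The goal is then to produce the bound with $p'=4q$.

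First I would reduce to nonnegative real integrands. Writing $f=\big((\mathrm{Re}\,f)^{+}-(\mathrm{Re}\,f)^{-}\big)+i\big((\mathrm{Im}\,f)^{+}-(\mathrm{Im}\,f)^{-}\big)$, each of the four parts is $\Sigma$-measurable, nonnegative, and bounded by $\abs{f}\le 1$. Since $f\mapsto\int f\,d\mu$ is linear (it is the algebra homomorphism induced by the spectral measure), subadditivity and absolute homogeneity of $p$ bound $p\big((\int f\,d\mu)x\big)$ by the sum of the four corresponding terms; hence it suffices to show $p\big((\int g\,d\mu)x\big)\le q(x)$ whenever $0\le g\le 1$.

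The heart of the argument is this estimate. For a simple function $g=\sum_{j=1}^{m}v_j\,\chi_{B_j}$ with the $B_j$ disjoint and $0=v_0<v_1<\cdots<v_m\le 1$, set $E_j=\{g\ge v_j\}=\bigcup_{k\ge j}B_k$. Abel summation gives $g=\sum_{j=1}^{m}(v_j-v_{j-1})\,\chi_{E_j}$, whence $\int_{\mscr S}g\,d\mu=\sum_{j=1}^{m}(v_j-v_{j-1})\,\mu(E_j)$. The weights $v_j-v_{j-1}$ are nonnegative and sum to $v_m\le 1$, so subadditivity and homogeneity of $p$, together with $p(\mu(E_j)x)\le q(x)$, yield
\[
p\!\left(\Big(\int_{\mscr S}g\,d\mu\Big)x\right)\ \le\ \sum_{j=1}^{m}(v_j-v_{j-1})\,p\big(\mu(E_j)x\big)\ \le\ \Big(\sum_{j=1}^{m}(v_j-v_{j-1})\Big)\,q(x)\ \le\ q(x).
\]
For general $g$ with $0\le g\le 1$ I would approximate by simple functions $g_n$ chosen within $[0,1]$ (e.g.\ dyadic truncations); by the construction of the integral, $(\int g_n\,d\mu)x\to(\int g\,d\mu)x$ in the strong topology, and since the displayed bound holds for every $g_n$ and $p$ is continuous, it passes to the limit. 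Combining with the reduction step gives $p'=4q$, and as $p$ was arbitrary the family is equicontinuous.

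The main obstacle I anticipate is not the algebra but the limiting step: one must ensure the simple approximants can be taken with values in $[0,1]$, so that the uniform bound survives, and that the mode of convergence furnished by Walsh's vector-valued spectral integral (which relies on sequential completeness of $\F$ and equicontinuity of $\mu$) is strong enough for the continuous seminorm $p$ to commute with the limit. The purely algebraic core, namely the convex-combination representation of $\int g\,d\mu$, is robust precisely because the total mass of the layer weights is controlled by $\sup g\le 1$, which is exactly where the constraint $0\le\abs{f}\le 1$ enters.
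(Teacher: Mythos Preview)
The paper does not supply its own proof of this proposition; it is quoted verbatim as Proposition~2.1 of Walsh~\cite{Walsh:1965vn} and used as a black box. There is therefore nothing in the present paper to compare your argument against.

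That said, your proposal is sound and is essentially the standard route to this result. The layer-cake (Abel summation) identity is exactly the right mechanism: it expresses $\int g\,d\mu$ for a nonnegative simple $g\le 1$ as a sub-convex combination of projections $\mu(E_j)$, so the equicontinuity bound $p(\mu(A)x)\le q(x)$ transfers directly with no loss of constant. The fourfold splitting into real/imaginary and positive/negative parts is the usual reduction, and the factor $4$ in $p'=4q$ is harmless. Your remarks about the limiting step are appropriate: the dyadic truncations $g_n = 2^{-n}\lfloor 2^n g\rfloor$ stay in $[0,1]$, converge uniformly to $g$, and the construction of the spectral integral in Walsh's framework (uniform limits of simple integrals, using sequential completeness of $\F$) gives strong convergence $(\int g_n\,d\mu)x\to(\int g\,d\mu)x$, so the continuous seminorm $p$ passes to the limit. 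No gaps.
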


%

Schaefer \cite{Schaefer:1962dl} has given an operational calculus for spectral measures.  If $\mscr A \subset \mscr L(\F)$ is a weakly semi-complete locally convex algebra, $U = \int f d\mu$ a spectral element and $g : \C \to \C$ a bounded complex Baire function, the integral $\int_{\mscr S} (g \circ f) d\mu$ defines another spectral element
	\begin{equation}\label{eq:operational-calculus-lct}
	g(U) = \int_{\mscr S} (g \circ f) d\mu = \int_{\C} g(z) E(dz) \in \mscr A .
	\end{equation}
This is the operational calculus for spectral elements in locally convex spaces.  If $g$ is continuous on the spectrum of $U$, we get the spectral mapping theorem $\sigma[ g(U) ] = g[ \sigma(U) ]$ \cite{Schaefer:1962dl}.    The spectral algebras we will be most concerned about are algebras of the form
	\begin{equation*}
	\mscr A(f) := \setc{ \int_{\mscr S} (g \circ f) d\mu }{ g: \C \to \C \text{ is a polynomial}}.
	\end{equation*}
Each polynomial is bounded since $\Image(f)$ is bounded.

The reader may wonder what is gained from defining the abstract spectral measure triple $(\mscr S, \Sigma, \mu)$ in the case of locally convex linear topological spaces, rather than the form $\int_{\sigma(U)} z E(dz)$ that is familiar from scalar-type spectral operators on Banach spaces \cite{Dunford:1954we}.   This abstract framework allows, for example, the treatment of products of commuting operators with random spectrum.

\begin{example}
Let $\mscr S = \set{ 0, 1 }$.  Let $\set{ f_{1}, f_{2}}$ be a pair of functions from $\mscr S$ into $\C$.  The spectral measure $\mu$ takes values as a projection operator on $\F$.  Assume that both $P_{0} = \mu(\set{0})$ and $P_{1} = \mu(\set{1})$ are  projections onto one-dimensional subspaces and $\F = {\Image(P_{0})} \oplus {\Image(P_{1})}$.

Let $f_{1}(0) = \lam_{0}$ and $f_{1}(1) = \lam_{1}$ and similarly for $f_{2}$ but with $\omega$ replacing $\lam$.  To this pair of functions is associated a pair of operators $U_{1}$ and $U_{2}$ acting on $\F$.  Both of these operators have eigenspaces corresponding to ${\Image(P_{0})}$ and ${\Image(P_{1})}$.  Consider a finite length sequence consisting of $f_{1}$ and $f_{2}$ with $f_{1}$ appearing $m_{1}$ times and $f_{2}$ appearing $m_{2}$ times.  By the multiplicative property of the spectral integral and the commutative property of the functions
	\begin{equation*}
	(U_{i_{1}}\cdots U_{i_{m_{1}+m_{2}}} )
	=  \int_{\mscr S} (f_{i_{1}} \cdots f_{i_{m_{1}+m_{2}}})(s) d\mu(s) 
	=  \int_{\mscr S} f_{1}^{m_{1}}(s) \cdot f_{2}^{m_{2}}(s) d\mu(s) 
	= \sum_{s=0}^{1} (\lam_{s}^{m_{1}}\omega_{s}^{m_{2}} ) P_{s}.
	\end{equation*}
where $i_{j} \in \set{ 1,2 }$ for $j=1,\dots, m_{1}+m_{2}$.  The operator resulting from the product is a spectral operator with eigenvalues $\gamma_{1} = \lam_{1}^{m_{1}}\omega_{1}^{m_{2}}$ and $\gamma_{2} = \lam_{2}^{m_{1}}\omega_{2}^{m_{2}}$.  It is straightforward to take a larger collection of functions so that we can get more possible composite eigenvalues.
\end{example}

%
%
%

Projections onto different parts of the spectrum take the form of integrating against indicator functions.

\begin{lem}\label{eq:spectral-projections-lct}
Let $\Gamma$ be a Baire subset of $\C$.  Let $\ind{\Gamma} : \sigma(U) \to \C$ be the indicator function for $\Gamma \cap \sigma(U)$.  Then $P_{\Gamma} = \int_{\sigma(U)} \ind{\Gamma}(z) E(dz) = \int_{\Gamma \cap \sigma(U)} E(dz)$ is a projection operator.
\end{lem}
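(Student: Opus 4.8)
The plan is to reduce the claim to the single algebraic fact that $\ind{\Gamma}$ is idempotent under pointwise multiplication and then transport this through the multiplicative structure of the spectral integral. First I would note that, since $\Gamma$ is a Baire set, $\Gamma \cap \sigma(U)$ is a Baire subset of $\sigma(U)$ and $\ind{\Gamma}$ is a bounded (indeed $\set{0,1}$-valued, so $0 \leq \abs{\ind{\Gamma}} \leq 1$) $\Sigma$-measurable simple function. Under the standing hypotheses that $(\mscr S, \Sigma, \mu)$ is an equicontinuous spectral measure triple and $\F$ is sequentially complete, the integral $\int_{\sigma(U)} \ind{\Gamma}\, dE$ is therefore a well-defined element of the spectral algebra $\mscr A \subset \mscr L(\F)$; Proposition \ref{prop:equicontinuous-family} even places it in an equicontinuous family. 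So $P_{\Gamma}$ is genuinely a continuous linear operator, and it remains only to verify idempotency.

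For that, the key step is the multiplicative (homomorphism) property of the integral recorded in the text: for simple measurable functions $h_{1}, h_{2}$ one has $\int h_{1} h_{2}\, d\mu = \int h_{1}\, d\mu \cdot \int h_{2}\, d\mu$, which is condition (ii) of Definition \ref{defn:spectral-measure} after passing to the change of measure $E = \mu \circ f^{-1}$ (equivalently, $E(A \cap B) = E(A)E(B)$). Taking $h_{1} = h_{2} = \ind{\Gamma}$ and using $\ind{\Gamma}^{2} = \ind{\Gamma}$ pointwise, I obtain
\begin{equation*}
P_{\Gamma}^{2} = \left( \int_{\sigma(U)} \ind{\Gamma}\, dE \right)^{2} = \int_{\sigma(U)} \ind{\Gamma}^{2}\, dE = \int_{\sigma(U)} \ind{\Gamma}\, dE = P_{\Gamma},
\end{equation*}
so $P_{\Gamma}$ is a projection. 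The second displayed equality in the statement, $\int_{\sigma(U)} \ind{\Gamma}\, dE = \int_{\Gamma \cap \sigma(U)} E(dz) = E(\Gamma \cap \sigma(U))$, is simply the defining property that the integral of an indicator against a (spectral) measure returns the value of the measure on that set.

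There is essentially no hard analytic obstacle here: the content is entirely the passage from set-multiplicativity $E(A \cap B) = E(A)E(B)$ to multiplicativity of integrals of indicators, which is immediate because indicators are simple functions and the spectral integral on simple functions is defined precisely so as to turn condition (ii) into operator multiplication. The only points requiring care are bookkeeping ones: confirming that $\Gamma \cap \sigma(U)$ is an admissible (Baire-measurable) set so that $E(\Gamma \cap \sigma(U))$ is meaningful, and confirming that $\ind{\Gamma}$ lies in the class of integrands for which the integral and its multiplicativity have been established. Both are secured by the Baire hypothesis on $\Gamma$ and the boundedness of $\ind{\Gamma}$, so no limiting argument beyond the already-cited integration theory is needed.
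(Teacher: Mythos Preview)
Your proof is correct and follows essentially the same approach as the paper: both arguments reduce to the idempotency of the indicator function combined with the multiplicative property of the spectral integral, the only cosmetic difference being that the paper writes the computation on the $\mscr S$ side using $\ind{\Gamma}\circ f$ and $\mu$, whereas you work directly on the $\C$ side with $\ind{\Gamma}$ and $E = \mu \circ f^{-1}$.
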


\begin{proof}
Since $\ind{\Gamma} \circ f$ is an indicator function for $f^{-1}(\Gamma)$, by the multiplicative property of the integral
	\begin{equation*}
	P^{2}_{\Gamma} = \left[\int_{\mscr S} (\ind{\Gamma} \circ f) d\mu \right] \cdot \left[\int_{\mscr S} (\ind{\Gamma} \circ f) d\mu\right] = \int_{\mscr S} (\ind{\Gamma} \circ f)^{2} d\mu = \int_{\mscr S} (\ind{\Gamma} \circ f) d\mu = P_{\Gamma}.
	\end{equation*}
\end{proof}

\section{Generalized Laplace Analysis in LCLT Spaces}\label{sec:gla-lclts}

Let $(\mscr S, \Sigma, \mu)$ be a spectral measure triple, $\F$ a separable locally sequentially weakly compact locally convex linear topological space.  Fix a bounded $\C$-valued $\Sigma$-measurable function $f$ and denote the associated spectral operator by $U$
	\begin{equation}
	U = \int_{\mscr S} f(s) d\mu(s) =  \int_{z\in \sigma} z E(dz) \in \mscr L(\F) .
	\end{equation}
Since $f$ is bounded, the spectrum of $U$ is contained in a compact set in the complex plane.

\begin{defn}\label{defn:isolated-cirlces}
We say that $U$ has a point spectrum concentrated on isolated circles if every point of the set
	\begin{equation*}
	\setc{ r \in \R }{ \exists s\in \mscr S, \abs{ f(s) } = r, (f(s)I - U)^{-1} \text{ does not exist}}
	\end{equation*}
is an isolated point.
\end{defn}

Equivalently, if $U$ has a point spectrum concentrated on isolated circles, then $\abs{\sigma_{p}(U)}$ is a discrete set.  This definition does not exclude the situation when eigenvalues are dense on a circle.  Such a situation arises naturally for dynamical systems possessing a limit cycle, as will be seen later.

\subsection{GLA for dominating point spectrum.}

\begin{lem}\label{lem:peripheral-equicontinuous-family}
Assume the peripheral spectrum of $U$ contains an eigenvalue $\lam \in \sigma_{p}(U)$.  Let $g_{n} : \sigma(U) \to \C$ be $g_{n}(z) = \lam^{-n} z^{n}$.  Then $\set{ g_{n}(U) }_{n\in \N} = \set{ \lam^{-n} U^{n} }_{n\in \N}$ is an equicontinuous family of operators.
\end{lem}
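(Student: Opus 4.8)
The plan is to reduce the claim to Walsh's equicontinuity result (Proposition \ref{prop:equicontinuous-family}) by exhibiting each $g_n(U)$ as the spectral integral of a function bounded by $1$ in modulus. The only structural fact that must be used is that $\lam$ lies in the \emph{peripheral} spectrum, so that $\abs{\lam} = \infnorm{\sigma(U)}$ is precisely the spectral radius of $U$; everything else is the operational calculus of \eqref{eq:operational-calculus-lct} together with the change of measure $E = \mu \circ f^{-1}$.

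First I would record the pointwise bound on the symbols. Because $\lam$ is peripheral, $\abs{\lam} = \infnorm{\sigma(U)} = \sup_{z \in \sigma(U)} \abs{z}$, so every $z \in \sigma(U)$ satisfies $\abs{z} \le \abs{\lam}$ and hence
\[
\abs{g_n(z)} = \abs{\lam^{-n} z^{n}} = \left( \frac{\abs{z}}{\abs{\lam}} \right)^{n} \le 1 \qquad (z \in \sigma(U),\ n \in \N).
\]
Note $\lam \ne 0$ is forced by the very definition of $g_n$ (it contains $\lam^{-n}$), so the spectral radius is positive and the quotient is well defined. Thus $g_n$ has modulus at most $1$ on the entire spectrum, uniformly in $n$.

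Next I would translate this into a bound on the integrand against $\mu$. By the operational calculus and the change of measure,
\[
g_n(U) = \int_{\mscr S} (g_n \circ f)\, d\mu = \int_{\C} g_n(z)\, E(dz) = \int_{\sigma(U)} g_n(z)\, E(dz),
\]
the last step because $E$ is supported on $\sigma(U)$. The function $g_n \circ f$ is $\Sigma$-measurable (a polynomial composed with the measurable $f$), and since $E(\C \setminus \sigma(U)) = \mu\big(f^{-1}(\C \setminus \sigma(U))\big) = 0$, we have $f(s) \in \sigma(U)$ for $\mu$-a.e.\ $s$. Combined with the bound above this yields $0 \le \abs{g_n \circ f} \le 1$ $\mu$-a.e.; multiplying $g_n \circ f$ by $\ind{f^{-1}(\sigma(U))}$ alters it only on a $\mu$-null set (hence leaves the integral unchanged) and produces a genuine representative bounded everywhere by $1$.

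Finally I would invoke Proposition \ref{prop:equicontinuous-family}, which asserts that the family of spectral integrals $\int_{\mscr S} h\, d\mu$ over $\Sigma$-measurable $h$ with $0 \le \abs{h} \le 1$ is equicontinuous. Since each $g_n(U)$ is such an integral, the family $\set{ g_n(U) }_{n\in\N} = \set{ \lam^{-n} U^{n} }_{n\in\N}$ is equicontinuous, as claimed. The single point requiring care — and the place I would be most deliberate — is the measure-theoretic gap between ``$\abs{g_n} \le 1$ on $\sigma(U)$'' and ``$\abs{g_n \circ f} \le 1$ as a function on all of $\mscr S$'': the range of $f$ need not lie in $\sigma(U)$ pointwise, only $\mu$-essentially, which is exactly why I pass to the support of $E$ (equivalently, modify on a null set) before applying Walsh's proposition.
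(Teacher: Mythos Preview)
Your proposal is correct and follows essentially the same route as the paper: bound $\abs{g_n}\le 1$ on $\sigma(U)$ using that $\lam$ is peripheral, pass to $g_n\circ f$ on $\mscr S$, and invoke Walsh's Proposition~\ref{prop:equicontinuous-family}. You are in fact slightly more careful than the paper on one point: the paper asserts $\abs{(g_n\circ f)(s)}\le 1$ for \emph{all} $s\in\mscr S$, whereas you correctly note this need only hold $\mu$-a.e.\ (since $f$ is only guaranteed to land in $\sigma(U)$ up to a $\mu$-null set) and fix it by modifying on a null set before applying the proposition.
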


\begin{proof}
Since $f$ is bounded, then $\sigma$ is contained in a compact set in $\C$.  For each $n$, the support of $g_{n}$ is a compact set, $g_{n}$ is a continuous and, therefore, a bounded Baire function on $\sigma$.  Since $\lam$ is a peripheral eigenvalue, then $\abs{g_{n}(z) } \leq 1$ for all $z\in \sigma$ since by definition $\abs{z}\leq \abs{\lam}$ for all $z \in \sigma$.  If $f:\mscr S \to \C$ is the function associated with $U$, then $\abs{(g_{n} \circ f)(s)} \leq 1$ for all $n \in \N$ and $s \in \mscr S$.  By proposition \ref{prop:equicontinuous-family}, $\set{g_{n}(U) = \int_{\mscr S} (g_{n} \circ f) d\mu }_{n\in \N}$ is an equicontinuous family.  By the functional calculus, $g_{n}(U) = \lam^{-n} U^{n}$.
\end{proof}

\begin{prop}\label{prop:peripheral-projection}
Assume the peripheral spectrum of $U$ contains an eigenvalue $\lam \in \sigma_{p}(U)$.  Then
	\begin{equation}
	A_{n}(\lam^{-1}U) := n^{-1} \sum_{k=1}^{n} \lam^{-k} U^{k}
	\end{equation}
converges to the projection operator $P_{\lam} := E(\set{\lam})$ onto $N(\lam I - U)$ in the strong operator topology; that is, for every continuous seminorm $p$ and every element $\psi \in \F$
	\begin{equation*}
	\lim_{n\to \infty} p( A_{n}(\lam^{-1}U) \psi - P_{\lam} \psi) = 0.
	\end{equation*}
Furthermore, the projection $P_{\lam}$ commutes with $U$ and we have the direct sum decomposition $\F = N(\lam I - U) \oplus \overline{ \Image(\lam I - U)}$.
\end{prop}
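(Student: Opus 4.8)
The plan is to realise the averages as spectral integrals, identify their limit by a dominated-convergence argument against the spectral measure, and then invoke Yosida's mean ergodic theorem to promote weak convergence to strong convergence and to extract the projection and direct-sum structure. Set $V := \lam^{-1}U$. By Lemma~\ref{lem:peripheral-equicontinuous-family} the family $\set{V^{n}}_{n\in\N}=\set{\lam^{-n}U^{n}}_{n\in\N}$ is equicontinuous, and by hypothesis $\F$ is separable and locally sequentially weakly compact; these are exactly the ingredients Yosida's theorem consumes.

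For the identification, the operational calculus \eqref{eq:operational-calculus-lct} gives $n^{-1}\sum_{k=0}^{n-1}V^{k} = \int_{\sigma} \tilde h_{n}\, dE$ with $\tilde h_{n}(z) = n^{-1}\sum_{k=0}^{n-1}(z/\lam)^{k}$. Because $\lam$ is a peripheral eigenvalue, $\abs{z/\lam}\le 1$ on $\sigma$, so $\tilde h_{n}$ is an average of points of the closed unit disc and $\abs{\tilde h_{n}}\le 1$ uniformly; a geometric-sum computation gives $\tilde h_{n}(\lam)=1$ for all $n$, while $\tilde h_{n}(z)\to 0$ for $z\in\sigma$, $z\ne\lam$, so that $\tilde h_{n}\to \ind{\set{\lam}}$ pointwise on $\sigma$. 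Fixing $\psi\in\F$ and $\phi^{*}\in\F^{*}$, the set function $\Gamma\mapsto \inner{E(\Gamma)\psi}{\phi^{*}}$ is a countably additive, hence finite, complex Baire measure on the compact set $\sigma$ (weak countable additivity of $E=\mu\circ f^{-1}$), so dominated convergence with dominating constant $1$ yields $\inner{n^{-1}\sum_{k=0}^{n-1}V^{k}\psi}{\phi^{*}} \to \inner{E(\set{\lam})\psi}{\phi^{*}}$. Thus the Ces\`{a}ro averages of $V$ converge weakly, elementwise, to $E(\set{\lam})$.

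With this weak convergence in hand, Yosida's mean ergodic theorem finishes the argument: its first conclusion upgrades the established weak subsequential convergence to strong convergence $n^{-1}\sum_{k=0}^{n-1}V^{k}\psi \to E(\set{\lam})\psi$ and shows $E(\set{\lam})\psi\in N(I-V)$, while its second conclusion (using local sequential weak compactness) exhibits $P:=\lim_n n^{-1}\sum_{k=0}^{n-1}V^{k}$ as a continuous projection onto $N(I-V)$ commuting with $V$, with $\F = N(I-V)\oplus\overline{\Image(I-V)}$. Comparison of limits gives $P=E(\set{\lam})=P_{\lam}$, and since $I-V=\lam^{-1}(\lam I-U)$ with $\lam\ne 0$ we have $N(I-V)=N(\lam I-U)$ and $\overline{\Image(I-V)}=\overline{\Image(\lam I-U)}$, which is the claimed decomposition. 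Finally the averages in the statement run over $k=1,\dots,n$ rather than $k=0,\dots,n-1$; the two differ by $n^{-1}(V^{n}-I)$, which tends to $0$ in the strong topology by equicontinuity of $\set{V^{n}}$, so $A_{n}(\lam^{-1}U)$ shares the same limit $P_{\lam}$.

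The step I expect to be the main obstacle is the dominated-convergence identification: Yosida cheaply supplies the existence of the strong limit and the decomposition, but pinning the limit down as the spectral projection $E(\set{\lam})$ requires descending to the scalar measures $\inner{E(\cdot)\psi}{\phi^{*}}$, confirming their countable additivity and finiteness on the compact spectrum, and verifying both the uniform bound $\abs{\tilde h_{n}}\le 1$ and the pointwise limit $\tilde h_{n}\to\ind{\set{\lam}}$ so that the interchange of limit and spectral integral is legitimate. The remaining manipulations --- equicontinuity, the rewriting $I-V=\lam^{-1}(\lam I-U)$, and the reconciliation of the summation indices --- are routine.
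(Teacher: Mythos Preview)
Your proof is correct, and it reaches the same conclusion as the paper, but the route is genuinely different in how the limit is identified with $E(\set{\lam})$. The paper's argument is much shorter: it invokes Lemma~\ref{lem:peripheral-equicontinuous-family} for equicontinuity, applies Yosida's theorem directly (local sequential weak compactness of $\F$ supplies the needed weakly convergent subsequences automatically), and obtains the strong limit as the projection onto $N(I-\lam^{-1}U)=N(\lam I-U)$; the identification with $E(\set{\lam})$ is then dispatched in one line by appealing to Lemma~\ref{eq:spectral-projections-lct}, i.e.\ to the fact that spectral projections are indicator functions against $E$, so the projection onto the $\lam$-eigenspace must be $E(\set{\lam})$. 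You instead \emph{first} pin down the weak limit by realising the Ces\`aro means as spectral integrals $\int_{\sigma}\tilde h_{n}\,dE$, showing $\tilde h_{n}\to\ind{\set{\lam}}$ pointwise and boundedly on $\sigma$, and passing to the limit in the scalar measures $\inner{E(\cdot)\psi}{\phi^{*}}$ via dominated convergence; only then do you invoke Yosida to upgrade weak convergence to strong convergence and to obtain the direct-sum decomposition. Your approach buys a fully explicit identification of the limit (nothing is left to the reader about why Yosida's projection and the spectral projection coincide), and you also explicitly reconcile the summation indices $k=1,\dots,n$ versus $k=0,\dots,n-1$, which the paper leaves implicit. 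The paper's approach buys economy: it avoids the scalar-measure and dominated-convergence machinery entirely.
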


\begin{proof}
By lemma \ref{lem:peripheral-equicontinuous-family}, $\set{ (\lam^{-1}U)^{n} }$ is an equicontinuous family of operators on a locally sequentially weakly compact LCLT space since $(\lam^{-1}U)^{n} = g_{n}(\lam^{-1}U)$, where $g_{n}$ is the function in the above lemma.  By Yosida's mean ergodic theorem, the averages of this family converge to a projection operator onto the $\lam$-eigenspace.  Since projections take the form of indicator functions on subsets of $\sigma \subset \C$ (lemma \ref{eq:spectral-projections-lct}), then this projection takes the form $E(\set{\lam})$.
\end{proof}

\begin{rem}
Proposition \ref{prop:peripheral-projection} is true for any other eigenvalue in the peripheral spectrum, say $\lam' \in \sigma_{p}$.  For this $\lam'$, the proposition can be applied to obtain the projection onto $N(\lam' I - U)$.  Additionally, $N(\lam' I - U) \subset \overline{\Image(\lam I - U)} $.  Since $\overline{\Image(\lam I - U)}$ is $U$-invariant and $\F$ is locally sequentially weakly compact, then $(\lam')^{-n}(U' )^{n}$ is a equicontinuous family of operators on the locally sequentially weakly compact space $\overline{\Image(\lam I - U)}$, where $U'$ is a the restriction of $U$ to $\overline{\Image(\lam I - U)}$.  By proposition \ref{prop:peripheral-projection}, $\overline{\Image(\lam I - U)} = N(\lam' I - U') \oplus \overline{\Image(\lam I - U')}$.  Since $N(\lam' I - U') = N(\lam' I - U)$ and $\Image(\lam' I - U') = \Image((\lam' I - U)(I - P_{\lam}))$, then
	\begin{equation*}
	\F = N(\lam I - U) \oplus N(\lam' I - U) \oplus \overline{\Image((\lam' I - U)(I - P_{\lam}))} .
	\end{equation*}
\end{rem}

\begin{cor}\label{cor:peripheral-direct-sum-decomposition}
Let the peripheral spectrum of $U$ consist of only (at most countably many) eigenvalues.  Then
	\begin{equation}\label{eq:peripheral-direct-sum}
	\F = \left[ \bigoplus_{j\geq 1} N(\lam_{j} I - U) \right] \oplus \F'
	\end{equation}
where  $\set{ \lam_{j} }$ is some labeling of the peripheral spectrum and $\F'$ is the subspace of $\F$ corresponding to the part of the spectrum not in the peripheral spectrum.  The subspace $\F'$ has the form
	\begin{equation}\label{eq:residual-subspace}
	\F' = \overline{ \left(I - { \sum\limits_{j\geq 1} } E(\set{\lam_{j}}) \right)} .
	\end{equation}
\end{cor}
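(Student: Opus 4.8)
The plan is to assemble the individual peripheral projections produced by Proposition \ref{prop:peripheral-projection} into a single spectral projection and then read off the complementary summand directly from the spectral measure. Since the peripheral spectrum consists only of eigenvalues $\set{\lam_j}$, each of which is by hypothesis peripheral, Proposition \ref{prop:peripheral-projection} applies to every $\lam_j$ individually and yields the projection $E(\set{\lam_j})$ onto $N(\lam_j I - U)$. The first step is therefore to record these projections and to establish that they are mutually orthogonal. This is immediate from the multiplicative property (condition (ii)) of the spectral measure triple in Definition \ref{defn:spectral-measure}: for $i \neq j$ the singletons $\set{\lam_i}$ and $\set{\lam_j}$ are disjoint, so $E(\set{\lam_i}) E(\set{\lam_j}) = E(\set{\lam_i} \cap \set{\lam_j}) = E(\emptyset) = 0$, the last equality following from finite additivity together with $E(\sigma) = I$.

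The second step is to form the total peripheral projection. Writing $\Lambda := \bigcup_{j\geq 1}\set{\lam_j}$ for the peripheral spectrum, $\Lambda$ is a countable union of points and hence a Baire set, so Lemma \ref{eq:spectral-projections-lct} guarantees that $P_{\Lambda} := E(\Lambda)$ is a genuine projection in $\mscr L(\F)$. Countable additivity of the spectral measure in the weak operator topology then gives $P_{\Lambda} = \sum_{j\geq 1} E(\set{\lam_j})$, the series converging in that topology, and the complementary projection $I - P_{\Lambda} = E(\sigma \setminus \Lambda)$ is likewise a continuous projection by Lemma \ref{eq:spectral-projections-lct}. I would then set $\F' := \overline{\Image(I - P_{\Lambda})}$, matching the stated formula \eqref{eq:residual-subspace}, and observe that $\F'$ is precisely the spectral subspace attached to the non-peripheral part $\sigma\setminus\Lambda$ of the spectrum.

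It remains to identify the range of $P_{\Lambda}$ with the direct sum $\bigoplus_{j\geq 1} N(\lam_j I - U)$. Because $P_{\Lambda}$ is continuous we have $\F = \Image(P_{\Lambda}) \oplus \Image(I - P_{\Lambda})$, and the orthogonality from the first step shows that any $\psi \in \Image(P_{\Lambda})$ decomposes as $\psi = \sum_{j} E(\set{\lam_j})\psi$ with $E(\set{\lam_j})\psi \in N(\lam_j I - U)$ and with the summands uniquely determined; this identifies $\Image(P_{\Lambda})$ with the closed internal direct sum of the eigenspaces. Combining the two summands yields \eqref{eq:peripheral-direct-sum}.

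I expect the main obstacle to be the passage from finitely many peripheral eigenvalues --- for which the decomposition follows by iterating Proposition \ref{prop:peripheral-projection} exactly as in the remark preceding this corollary --- to the countable case. The iterative peeling-off argument does not converge in the strong topology on its own; the correct tool is countable additivity of $\mu$ in the weak operator topology, which is precisely why the spectral-measure machinery of Definition \ref{defn:spectral-measure} was set up. Care is needed because the series $\sum_{j} E(\set{\lam_j})$ converges only weakly, so the identity $\F' = \overline{\Image(I - \sum_{j} E(\set{\lam_j}))}$ must be read with the closure supplying the gap between weak convergence of the partial sums and the strong topology in which the direct-sum decomposition is asserted.
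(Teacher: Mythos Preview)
The paper states this corollary without proof; it is presented as an immediate consequence of Proposition \ref{prop:peripheral-projection} together with the preceding remark, which carries out the two-eigenvalue case by iteration. Your proposal is correct and supplies exactly the details the paper omits, drawing on the spectral-measure machinery (multiplicativity for orthogonality, countable additivity for the passage to infinitely many peripheral eigenvalues, Lemma \ref{eq:spectral-projections-lct} for the projection $E(\Lambda)$) rather than attempting to push the iterative peeling argument to a limit. This is the right call: as you note, the iteration in the remark does not by itself handle the countable case, and the weak-operator countable additivity of $E$ is precisely the tool needed. One small observation: since $P_{\Lambda} = E(\Lambda)$ is a continuous projection, $\Image(I - P_{\Lambda}) = \ker(P_{\Lambda})$ is already closed, so the closure in \eqref{eq:residual-subspace} is in fact redundant once one interprets the infinite sum as $E(\Lambda)$; the bar is doing no real work beyond accommodating the weakly convergent series notation.
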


The above results can be combined to give a recursive procedure for computing projections onto eigenspaces for any $\psi \in \F$.  In order to do this, we must compute the projection onto eigenspaces corresponding to eigenvalues of largest modulus first, subtract these from $\psi$, then compute the projections for eigenvalues of the next largest modulus.

\begin{thrm}[Generalized Laplace Analysis]\label{eq:gla-lcs-forward}
Let $\sigma(U)$ have dominating point spectrum and assume that the point spectrum is concentrated on isolated circles in the complex plane (definition \ref{defn:isolated-cirlces}).  Let $\lam$ be an eigenvalue for $U$.  Denote $E(\set{\lam})$ by $P_{\lam}$.  Then
	\begin{equation}\label{eq:gla-lcs}
	\begin{aligned}
	P_{\lam} &=  \lim_{n\to \infty} A_{n}\left( \lam^{-1} U(I - \sum_{ \mu \in \Omega} P_{\mu} ) \right) 
	=  \lim_{n\to \infty} n^{-1} \sum_{k=1}^{n-1} \lam^{-n} U^{n} \left( I - \sum_{ \mu \in \Omega } P_{\mu} \right)
	\end{aligned}
	\end{equation}
exists in the strong topology and where $\Omega := \setc{\mu \in \sigma_{p}(U) }{\abs{\mu} > \abs{\lam}}$.  Furthermore, if $\Omega$ is a finite set, then $P_{\lam} \psi$ can be obtained via a finite recursion processes by computing $P_{\mu} \psi$ with \eqref{eq:gla-lcs} for each $\mu \in \Omega$ and subtracting it from $\psi$.
\end{thrm}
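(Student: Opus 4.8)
The plan is to reduce \eqref{eq:gla-lcs} to the peripheral projection already established in Proposition \ref{prop:peripheral-projection} by excising from the spectral picture exactly the eigenvalues of modulus strictly larger than $\abs{\lam}$, thereby promoting $\lam$ to a peripheral eigenvalue of a modified operator. Concretely, I would set $V := U\bigl(I - \sum_{\mu\in\Omega}P_{\mu}\bigr)$ and first identify it as a spectral element of the same triple. Since $\F$ is separable, the atoms of $E$ are at most countable, so $\Omega$ is at most countable; writing $\Omega$ as a disjoint union of singletons and using countable additivity of $\mu$ in the weak operator topology gives $\sum_{\mu\in\Omega}P_{\mu} = \sum_{\mu\in\Omega}E(\set{\mu}) = E(\Omega)$, whence $I - \sum_{\mu\in\Omega}P_{\mu} = E(\sigma\setminus\Omega)$. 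Multiplicativity of the spectral integral then yields $V = \int_{\C} g(z)\,E(dz)$ with $g(z) := z\,\ind{\sigma\setminus\Omega}(z)$, so $V$ is again a spectral element to which every result of this section applies, and its spectral measure is $E\circ g^{-1}$. Because $g^{-1}(\set{\lam}) = \set{\lam}$ for $\lam\neq 0$, the $\lam$-projection of $V$ equals $E(\set{\lam}) = P_{\lam}$; in particular $\lam$ remains an eigenvalue of $V$.

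The crux is to verify that $\lam$ lies in the \emph{peripheral} spectrum of $V$, i.e. that $\abs{\lam}$ is the spectral radius of $V$. Here the dominating point spectrum hypothesis enters decisively. Taking $R$ as in Definition \ref{defn:dominating-point-spectrum} and working with an eigenvalue satisfying $\abs{\lam} > R$ (as the construction effectively requires in order to isolate $\lam$), condition (\ref{item:peripheral-eigenvalues}) applied to any radius $R < r < \abs{\lam}$ shows that $\sigma\setminus\D_{r}$ consists only of eigenvalues; hence every spectral point of modulus exceeding $\abs{\lam}$ is an eigenvalue, so it belongs to $\Omega$ and has been deleted. Consequently $\sigma\setminus\Omega \subset \clD_{\abs{\lam}}$, and since $\lam\in\sigma\setminus\Omega$ the spectral radius of $V$ equals $\abs{\lam}$; that is, $\lam$ is a peripheral eigenvalue of $V$. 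Lemma \ref{lem:dominating-spectrum-properties}(\ref{item:dominating-subset}) confirms that $\sigma\setminus\Omega$ again has dominating point spectrum, which is what makes this deletion legitimate.

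With $\lam$ peripheral for $V$, Lemma \ref{lem:peripheral-equicontinuous-family} gives that $\set{\lam^{-n}V^{n}}_{n\in\N}$ is equicontinuous, and Proposition \ref{prop:peripheral-projection} then yields that $A_{n}(\lam^{-1}V)$ converges in the strong operator topology to the $\lam$-projection of $V$, namely $P_{\lam}$. It only remains to recognize these averages as those in \eqref{eq:gla-lcs}: since $E(\Omega)$ commutes with $U$ and is idempotent, $V^{k} = [\,U(I - E(\Omega))\,]^{k} = U^{k}(I - E(\Omega))$, so $A_{n}(\lam^{-1}V) = n^{-1}\sum_{k=1}^{n}\lam^{-k}U^{k}\bigl(I - \sum_{\mu\in\Omega}P_{\mu}\bigr)$, which is exactly the average whose limit is asserted. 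This establishes existence of the limit and its equality with $P_{\lam}$.

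For the final assertion, when $\Omega$ is finite I would argue by downward recursion on modulus. For each $\mu\in\Omega$ the associated index set $\setc{\nu\in\sigma_{p}(U)}{\abs{\nu}>\abs{\mu}}$ is a proper subset of $\Omega$ (because $\abs{\mu}>\abs{\lam}$), and the eigenvalue(s) of maximal modulus have empty index set, for which \eqref{eq:gla-lcs} collapses to the bare average $A_{n}(\lam^{-1}U)$ covered directly by Proposition \ref{prop:peripheral-projection}. Ordering the finitely many moduli occurring in $\Omega\cup\set{\lam}$ decreasingly and applying \eqref{eq:gla-lcs} along this order, each $P_{\mu}\psi$ is computed from projections already obtained, so $\sum_{\mu\in\Omega}P_{\mu}\psi$ and then $P_{\lam}\psi$ are reached in finitely many steps. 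I expect the main obstacle to be the second step: guaranteeing that after removing the eigenvalues in $\Omega$ nothing of modulus larger than $\abs{\lam}$ survives --- in particular that no continuous spectrum lies above $\abs{\lam}$ --- which is precisely what the dominating point spectrum hypothesis (together with the restriction $\abs{\lam}>R$) secures, while the countability needed to form $E(\Omega)=\sum_{\mu\in\Omega}P_{\mu}$ rests on separability of $\F$.
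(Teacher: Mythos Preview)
Your proposal is correct and follows essentially the same route as the paper: form $V = U(I - E(\Omega))$, verify that $\lam$ is peripheral for $V$ using the dominating point spectrum hypothesis (the paper invokes the isolated-circles assumption at this step to make $z\mapsto z\,\ind{\sigma\setminus\Omega}(z)$ continuous on $\sigma(U)$ and apply the spectral mapping theorem), invoke Proposition \ref{prop:peripheral-projection}, and unwind $V^{k} = U^{k}(I - E(\Omega))$ via multiplicativity of the spectral integral. Your identification of the limiting projection as $P_{\lam}$ directly through the change of measure $E\circ g^{-1}$ is slightly cleaner than the paper's endgame, which instead shows by contradiction that the Yosida limit $\psi_{0}$ satisfies $E(\set{\mu})\psi_{0} = 0$ for every $\mu\in\Omega$ before concluding $U\psi_{0} = \lam\psi_{0}$.
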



\begin{proof}
Fix $\lam \in \sigma_{p}(U)$ and let $\Omega = \setc{ \mu \in \sigma(U) }{ \abs{\mu} > \abs{\lam} }$.  Since $\sigma(U)$ has dominating point spectrum, $\Omega$ consists only of eigenvalues.  By property \eqref{item:dominating-subset} of lemma \ref{lem:dominating-spectrum-properties}, $\sigma(U) \setminus \Omega$ has dominating point spectrum.  Since the point spectrum of $U$ is concentrated on isolated circles, $h(z) = 1 - \ind{\Omega}(z)$ is a bounded, continuous function on $\sigma(U)$ and $h(U) = \int_{\sigma(U)} h(z) E(dz) =  I - E(\Omega)$.  By the spectral mapping theorem with the continuous function $z \mapsto zh(z)$, $\sigma( U (I - E(\Omega)) ) = (\sigma(U) \setminus \Omega) \cup \set{0}$.  Since $\sigma(U) \setminus \Omega$ has dominating point spectrum, so does $\sigma( U (I - E(\Omega)) )$.  Since $\lam \in \sigma( U (I - E(\Omega)) )$ and $\infnorm{ U (I - E(\Omega)) } \leq \abs{\lam}$, then $\lam$ is a peripheral eigenvalue for $U(I - E(\Omega))$.

Let $g_{n}(z) = \lam^{-n} z^{n} (1 - \ind{\Omega}(z))$.  Then $g_{n}$ is a continuous function satisfying $\abs{g_{n}(z)} \leq 1$ for all $n\in\N$ and $z \in \sigma$ since it is 0 on $\Omega$ and $\lam$ is a peripheral eigenvalue in $\sigma( U ) \setminus \Omega$.  Therefore, $g_{n}$ is a Baire function for every $n$.  If $f :\mscr S \to \C$ is the function associated with $U$, then $\abs{ (g_{n} \circ f)(s)} \leq 1$ for all $n\in \N$ and $s\in \mscr S$.  Therefore, by proposition \ref{prop:equicontinuous-family}, $\set{g_{n}(U)}_{n \in \N}$ is an equicontinuous family.

By the multiplicative property of the spectral integrals
	\begin{align*}
	\left[\lam^{-1} U (I - E(\Omega)) \right]^{n} 
	&= \int_{\sigma} \lam^{-n} z^{n}(1 - \ind{\Omega}(z))^{n} E(dz) \\
	&= \left( \int_{\sigma} \lam^{-n} z^{n} E(dz) \right) \cdot \left( \int_{\sigma} (1 - \ind{\Omega}(z)) E(dz) \right) \\
	&= \lam^{-n}U^{n} \left( I - E(\Omega) \right) .
	\end{align*}
Since the point spectrum is at most countable and $\Omega$ contains only eigenvalues, then $\Omega$ is a countable union of singleton sets of eigenvalues,  $\Omega = {\textstyle \bigcup\limits_{\mu \in \Omega} \set{\mu} } $.  Since the spectral measure is countably additive, then $E(\Omega) = \sum\limits_{\mu \in \Omega} E(\set{\mu})$.  Therefore, $\lam^{-n}U^{n} \left( I - E(\Omega) \right) = \mu^{-n} U^{n} ( I - \sum\limits_{\mu \in \Omega} E(\set{\mu})$.

By proposition \ref{prop:peripheral-projection}, the right side of \eqref{eq:gla-lcs} converge strongly to a $\lam^{-1}U ( I - \sum\limits_{\mu \in \Omega} E(\set{\mu}) )$-invariant function, say $\psi_{0} \in \F$.  Then $U ( I - \sum\limits_{\mu\in\Omega} E(\set{\mu}) ) \psi_{0} = \lam \psi_{0}$.  We claim that $\psi_{0}$ is in the nullspace of $E(\set{\mu})$ for every $\mu \in \Omega$.  Suppose this was not the case and $\psi_{0} \neq 0$.  Then there is some $\mu' \in \Omega$ such that $E(\set{\mu'}) \psi_{0} = \phi \neq 0$.  But $E(\set{\mu'})$ is the projection onto $N(\mu' I - U)$.  Therefore, we get $U( I - E(\set{\mu'}) ) \psi_{0} = U\psi_{0} - \mu' \psi_{0} = \lam \psi_{0}$.  Then $U \psi_{0} = (\mu' + \lam) \psi_{0}$, which implies that $\psi_{0} \in N((\mu'+\lam)I - U)$.  But since $\lam \neq 0$, this implies that $\psi_{0} \in N((\mu'+\lam)I - U) \cap N(\mu' I - U)$.  Consequently, $\psi_{0} = 0$, contrary to assumption.  Therefore, $\psi_{0}$ is in the nullspace of $\sum\limits_{\mu \in \Omega} E(\set{\mu})$ and we get that $U\psi_{0} = \lam \psi_{0}$.  Consequently, the averages converge strongly to the projection $E(\set{\lam})$ onto $N(\lam I - U)$. 
\end{proof}

\subsection{GLA for minimal point spectrum.}

The above results gave a procedure for constructing eigenfunctions when the point spectrum dominated the rest of the spectrum.  Unfortunately, this situation does not hold in a number of cases of interest.  Consider a dynamical system with an attractor.  In this setting, the spectrum on the unit circle corresponds to the attractor.  Since the system is asymptotically stable, eigenvalues corresponding to eigenfunctions supported off-attractor are contained strictly inside the unit circle.  In this system, the point spectrum may not dominate the spectrum since there may be continuous parts of the spectrum contained in the unit circle.  If we wish to project onto the off-attractor, stable eigenspaces, we need to modify the above GLA procedure which was valid in the presence of a dominating point spectrum.  The general idea is to consider the inverse operator $U^{-1}$.  If $U$ has point spectrum inside the unit circle, then $U^{-1}$ has point spectrum outside the unit circle.  The GLA theorems of the last section can then be applied to $U^{-1}$ to obtain projections onto the stable directions of the attractor.  Proposition \ref{prop:gla-inverse-iteration} formalizes this.

\begin{defn}[Minimal point spectrum]
Let $U \in \mscr A$.  We say that $\sigma(U)$ has a minimal point spectrum if $\sigma^{-1}(U) := \setc{ \lam^{-1} }{\lam \in \sigma(U)} \in \C \cup \set{\infty}$ has a dominating point spectrum.
\end{defn}

\begin{prop}[Inverse GLA]\label{prop:gla-inverse-iteration}
Let $U = \int_{\sigma} z E(dz) \in \mscr A$ have a minimal point spectrum.  Additionally, assume that the point spectrum is concentrated on isolated circles and the spectrum satisfies $0 < C^{-1} \leq \inf\limits_{\xi\in\sigma(U)} \abs{\xi}$.  Then for $\lam \in \sigma_{p}(U)$,
	\begin{equation}
	E(\set{\lam}) = \lim_{n\to \infty} A_{n}\left( \lam U^{-1}( I - E(\Omega) ) \right)
	\end{equation}
where $\Omega = \setc{ \xi \in \sigma(U) }{ \abs{\xi} < \abs{\lam} }$.
\end{prop}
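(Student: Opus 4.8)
The plan is to reduce the statement to the already-proven forward Generalized Laplace Analysis theorem (Theorem \ref{eq:gla-lcs-forward}) by passing to the inverse operator $U^{-1}$, whose point spectrum is dominating precisely because that of $U$ is minimal. The first step is to confirm that $U^{-1}$ is a genuine spectral element of $\mscr A$ and not merely a bounded inverse. The hypothesis $0 < C^{-1} \leq \inf_{\xi \in \sigma(U)} \abs{\xi}$ forces $0 \notin \sigma(U)$, so that $g(z) := z^{-1}$ is continuous and bounded (by $C$) on the compact spectrum $\sigma(U)$; hence $g$ is a bounded Baire function and the operational calculus \eqref{eq:operational-calculus-lct} yields
\[
U^{-1} = g(U) = \int_{\mscr S} (g\circ f)\, d\mu = \int_{\C} z^{-1}\, E(dz) \in \mscr A,
\]
with $U\,U^{-1} = U^{-1}U = I$ following from multiplicativity of the spectral integral. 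Since $z^{-1}$ is bounded on $\sigma(U)$, the averaging families generated from $U^{-1}$ stay within the reach of Proposition \ref{prop:equicontinuous-family}; this is exactly the point at which the uniform lower bound is used.

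Next I would transport the spectral data through $g$. By the spectral mapping theorem (valid because $g$ is continuous on $\sigma(U)$), $\sigma(U^{-1}) = g[\sigma(U)] = \setc{\xi^{-1}}{\xi \in \sigma(U)}$, and the usual eigenvector computation shows $\lam \in \sigma_{p}(U)$ if and only if $\lam^{-1} \in \sigma_{p}(U^{-1})$. The spectral measure of $U^{-1}$ is the pushforward $E' = E \circ g^{-1}$, so that $E'(\Gamma) = E(g^{-1}(\Gamma))$ for every Baire set $\Gamma$; in particular $E'(\set{\lam^{-1}}) = E(\set{\lam})$, and more generally $E'(\Omega') = E(\Omega)$ whenever $\Omega' = g(\Omega)$.

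I would then check that $U^{-1}$ meets the hypotheses of Theorem \ref{eq:gla-lcs-forward}. By the definition of minimal point spectrum, $\sigma(U^{-1}) = \sigma^{-1}(U)$ has dominating point spectrum. Since $r \mapsto r^{-1}$ is a homeomorphism of $(0,\infty)$ and $\abs{\sigma_{p}(U)} \subset [\,C^{-1}, \infnorm{\sigma(U)}\,]$ is bounded away from both $0$ and $\infty$, the set $\abs{\sigma_{p}(U^{-1})}$ is discrete if and only if $\abs{\sigma_{p}(U)}$ is, so the point spectrum of $U^{-1}$ is again concentrated on isolated circles. Applying Theorem \ref{eq:gla-lcs-forward} to $U^{-1}$ at the eigenvalue $\lam^{-1}$ then gives
\[
E'(\set{\lam^{-1}}) = \lim_{n\to\infty} A_{n}\!\left( (\lam^{-1})^{-1}\, U^{-1}\bigl( I - E'(\Omega') \bigr) \right),
\qquad \Omega' = \setc{\nu \in \sigma_{p}(U^{-1})}{ \abs{\nu} > \abs{\lam^{-1}} }.
\]
Under the substitution $\nu = \xi^{-1}$ the inequality $\abs{\nu} > \abs{\lam^{-1}}$ is equivalent to $\abs{\xi} < \abs{\lam}$, so $\Omega' = g(\Omega)$ with $\Omega$ the set in the statement; inserting $(\lam^{-1})^{-1} = \lam$, $E'(\Omega') = E(\Omega)$, and $E'(\set{\lam^{-1}}) = E(\set{\lam})$ yields exactly the claimed identity.

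The main obstacle I anticipate is not the bookkeeping around $z \mapsto z^{-1}$, which is elementary, but rather making the first two steps rigorous: one must verify that $U^{-1}$ lands inside the equicontinuous spectral-measure framework with spectral measure equal to the pushforward $E'$, so that the equicontinuity of $\set{ (\lam^{-1})^{-n}(U^{-1})^{n}(I - E'(\Omega')) }$ on which the invocation of Theorem \ref{eq:gla-lcs-forward} rests actually holds. The boundedness of $z^{-1}$ secured by $\inf_{\xi}\abs{\xi} \geq C^{-1} > 0$ is precisely what keeps this inversion within the theory rather than producing an unbounded operator to which none of the averaging machinery applies.
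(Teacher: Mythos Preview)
Your proposal is correct and follows essentially the same route as the paper: use the lower bound on $\abs{\sigma(U)}$ to realize $U^{-1}$ via the functional calculus with $g(z)=z^{-1}$, push the spectral measure forward through $g$ to obtain the resolution $E' = E\circ g^{-1}$ for $U^{-1}$, verify that $U^{-1}$ has dominating point spectrum on isolated circles, apply the forward GLA theorem at $\lam^{-1}$, and translate back via $E'(\set{\lam^{-1}}) = E(\set{\lam})$ and $\Omega' \leftrightarrow \Omega$. Your treatment is, if anything, slightly more explicit than the paper's on why the isolated-circles condition and the equicontinuity framework survive inversion.
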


\begin{proof}
Since $0 < C^{-1} \leq \inf\limits_{\xi\in\sigma(U)} \abs{\xi}$, then $i(z) = z^{-1}$ is continuous and bounded on $\sigma(U)$.  By the multiplicative property of the integral
	\begin{equation*}
	U\cdot i(U) = \int_{\sigma(U)} z \cdot i(z) E(dz) = I = \int_{\sigma(U)} i(z) \cdot z  E(dz) = i(U)\cdot U .
	\end{equation*}
Therefore $i(U) = U^{-1}$. By the spectral mapping theorem $\sigma(U^{-1}) = \setc{ \xi^{-1} }{ \xi \in \sigma(U)}$.  Therefore, $U^{-1}$ has a bounded, dominating point spectrum that is concentrated on isolated circles.  By the change of measure $E_{1} := E \circ i^{-1}$, we have the representation of $U^{-1}$ as the integral
	\begin{equation*}
	U^{-1} = \int_{\sigma(U^{-1})} w E_{1}(dw).
	\end{equation*}

Fix $\lam \in \sigma_{p}(U)$ and let $\Omega' = \setc{\xi^{-1} \in \sigma(U^{-1})}{ \abs{\xi^{-1}} > \abs{\lam^{-1}}}$.  By proposition \ref{eq:gla-lcs-forward}, 
	\begin{equation*}
	E_{1}(\set{\lam^{-1}}) = \lim_{n\to\infty} A_{n}\left( (\lam^{-1})^{-1} U^{-1}\left( I - \sum_{\xi^{-1} \in \Omega'} E_{1}(\set{\xi^{-1}})	\right)	\right)
	\end{equation*}
with convergence in the strong topology.  Since $N(\lam I - U) = N(\lam^{-1} I - U^{-1})$, then $E(\set{\lam}) = E_{1}(\set{\lam^{-1}})$ for all $\lam \in \sigma_{p}(U)$.  Consequently,
	\begin{equation*}
	E(\set{\lam}) = \lim_{n\to\infty} A_{n}\left( \lam U^{-1}\left( I - \sum_{\xi \in \Omega} E(\set{\xi})	\right)	\right) = \lim_{n\to\infty} A_{n}\left( \lam U^{-1}\left( I - E(\Omega)	\right)	\right)
	\end{equation*}
where $\Omega = \setc{\xi \in \sigma(U)}{ \abs{\xi} < \abs{\lam}}$.
\end{proof}

\begin{rem}[GLA for strongly continuous groups]
The GLA theorems can be extended to hold for strongly continuous groups of spectral operators on locally  sequentially weakly compact Banach spaces.  In this setting, the Laplace averages of the group take the form
	\begin{equation}\label{eq:continuous-time-average}
	A_{\alpha}(\lam^{-1}G(\cdot)) f := \alpha^{-1} \int_{0}^{\alpha} \lam^{-t} G(t) f .
	\end{equation}
The extension of the GLA theorems in this case rest upon a version of Yosida's mean ergodic theorem for strongly continuous semigroups of operators $\set{ G(t) }$, $t \in \R^{+}$, that roughly says that if $n^{-1} G(n) f$ converges to 0 and the averages of the semigroup do not behave too badly between integer times, then the limit of the continuous time averages $\al^{-1} \int_{0}^{\al} G(t)$ converges in the strong operator topology as $\al \to \infty$ to a projection operator on the subspace of elements that are fixed points of the semigroup (see ch.\ VIII.7, thm.\ 1 of \cite{Dunford:1958wx}, and its corollary, for a precise statement).  A second result used in the extension is due to Lange and Nagy \cite{Lange:1994gk} on the representation of strongly continuous \emph{groups} of scalar-type operators with spectrums contained in the unit circle as a spectral integral over a common spectral measure; i.e., the group of operators has a representation as $G(t) = \int_{\R} e^{it\lam} E(d\lam)$.  A sketch of the proof is as follows.

If we assume that there is a set of isolated circles in the complex plane and that each $G(t)$ in the group of operators is spectral with point spectrum restricted to these circles and furthermore that every operator from the group has a dominating point spectrum with the same $R$ (see definition \ref{defn:dominating-point-spectrum}), then we can scale the family by an eigenvalue $\lam = \rho_{1}e^{i\omega}$, where $\rho_{1}$ is the radius of the largest circle containing the spectrum, and construct a new family of spectral operators $G_{1}(t) := (\rho_{1}e^{i\omega})^{-t}G(t)$.  This new family will have part of the spectrum contained in the unit circle and the rest strictly inside.  This new group can be split into two groups, the unitary part $U_{1}(t)$, with spectrum contained in the unit circle, and the dissipative part $D_{1}(t)$, with spectrum contained strictly inside and bounded away from the unit circle uniformly in $t$.  Then $(\rho_{1}e^{i\omega})^{-t}G(t) = G_{1}(t) = U_{1}(t) + D_{1}(t)$.  Lange and Nagy's representation of the unitary part allows us to write $U_{1}(t) = \int_{\R} e^{it\lam} E_{1}(d\lam)$ and the above extension of Yosida's mean ergodic theorem allows us to prove that averages of $U_{1}(t)$ converge to the projection operators onto the subspace of $U_{1}(t)$-invariant elements.  An average of the dissipative part converges to zero with order $O(\alpha^{-1})$ since the norm of this average has order $\alpha^{-1} \int_{0}^{\alpha} O(e^{-\beta t}) dt$.  Here, $\beta > 0$ is related to the gap between the unit circle and the largest circle contained in $\D$ on which the spectrum of $D_{1}(t)$ is concentrated.  Combined, the above arguments give that the Laplace averages $A_{\al}( e^{-i\omega} \rho_{1}^{-1} G(\cdot) ) = \al^{-1} \int_{0}^{\al} (\rho_{1}e^{i\omega })^{-t} G(t) dt$ converge in the strong operator topology to the subspace of elements such that $G(t)\psi = (\rho_{1}e^{i\omega})^{t} \psi $.
\end{rem}

\section{Connections with Dynamical Systems and the Koopman Operator}\label{sec:dyn-sys}
This section connects the abstract results of the previous section with dynamical systems.  The operator we are interested in is the Koopman operator associated with a dynamical system $(\X,\Phi)$, where $\Phi: \X \to \X$ (we have yet to put any structure on $\Phi$).  Recall that the Koopman operator is the composition operator $U_{\Phi}\psi = \psi \circ \Phi$, for all $\psi$ in some space of functions $\F$.  In particular, we will show that the conditions on the set of eigenvalues $\Lambda$ (namely, it being a bounded set and concentrated on isolated circles) are natural and are satisfied for the examples we consider.  Additionally, we will construct a space of observables for both attracting fixed points and limit cycles on which the induced Koopman operator is spectral.  Connections are drawn between these spaces and the Hilbert Hardy space $H^{2}(\D)$.

For problems on the attractor, it is natural to consider the space $L^{2}(\nu)$, where $\nu$ a probability measure supported on the attractor that is preserved by the transformation.  This case has been treated extensively in the literature.  However, this space is not particularly natural for dissipative dynamics.  For example, consider a dynamical system on the real line with the origin as a globally attracting fixed point.  The invariant measure for this system is a delta measure supported at zero,  in which case, the associated Hilbert space of observables, $L^{2}(\R,\delta_{0})$, is isomorphic to $\R$; every function agreeing at zero is equivalent regardless of their values away from zero.  The space $L^{2}(\R,\delta_{0})$ cannot give any information about the dynamics away from the attractor.  Natural observables (as seen in section \ref{subsubsec:stable-fixed-point} below) for the dissipative dynamics are polynomials and their completions under certain norms.

The construction of spaces on which the Koopman operator is spectral is accomplished as follows.  First we consider the the linearized system $(\hat{\mscr Y}, \hat{\msf A})$ and the associated Koopman operator $U_{\hat{\msf A}}$, where $\hat{\mscr Y}$ is contained in the open unit cube $Q_{1}$ and $\hat{\msf A}$ is the linearization of the dynamics $\Phi$ around the attractor.  We find a set of eigenfunctions for $U_{\hat{\msf A}}$ such that arbitrary pointwise products from this set are also well-defined functions on $\hat{\mscr Y}$.  These products will also be eigenfunctions of the Koopman operator (see prop.\ 5, \cite{Budisic:2012cf}).  It will be shown that these products will generate a space of polynomials over a normed unital commutative rings.  This space of polynomials will be completed to a Banach space of observables $\F$ using an $\ell^{2}$ polynomial norm, as in lemma \ref{lem:polynomial-completion} below.  

Construction of observables for the nonlinear dynamical system can be constructed through a conjugacy map and a pullback construction.  Assume $(\X, \Phi)$ is topologically conjugate to the linearized system $({\mscr Y}, \msf A)$ via the diffeomorphism $h : \X \to {\mscr Y}$; $h \circ \Phi = \msf A \circ g$.  Let $B \subset \X$ be a simply connected, bounded, positively invariant open set in $\X$ such that $h(B) \subset Q_{r} \subset \mscr Y$, where $Q_{r}$ is a cube in $\mscr Y$.  Scaling $Q_{r}$ to the unit cube $Q_{1}$ via the smooth diffeomorphism $g : Q_{r} \to Q_{1}$ gives $(g\circ h)(B) \subset Q_{1}$.  Then if $\psi \in \F$ is an eigenfunction for $U_{\hat{\msf A}} : \F \to \F$ at $\lam$, then $\psi \circ g \circ h$ is an eigenfunction for $U_{\Phi}$ at eigenvalue $\lam$ (see prop.\ 7, \cite{Budisic:2012cf}).  The observable space for $U_{\Phi}$ will be given as $\F\circ g \circ h = \setc{ \phi \circ g \circ h}{ \phi \in \F}$.  See figure \ref{fig:conjugacy-chain} to for a schematic.  Results from Lan and Mezi\'{c} \cite{Lan:2012vw} guarantees the existence of a $C^{1}$-topological conjugacy between the nonlinear and linear system which is valid in the entire basin of attraction of certain hyperbolic attractors as long as the nonlinear dynamical system $(\X, \Phi)$ is sufficiently smooth.

\begin{figure}[h]
\begin{center}
\includegraphics[width=0.95\textwidth]{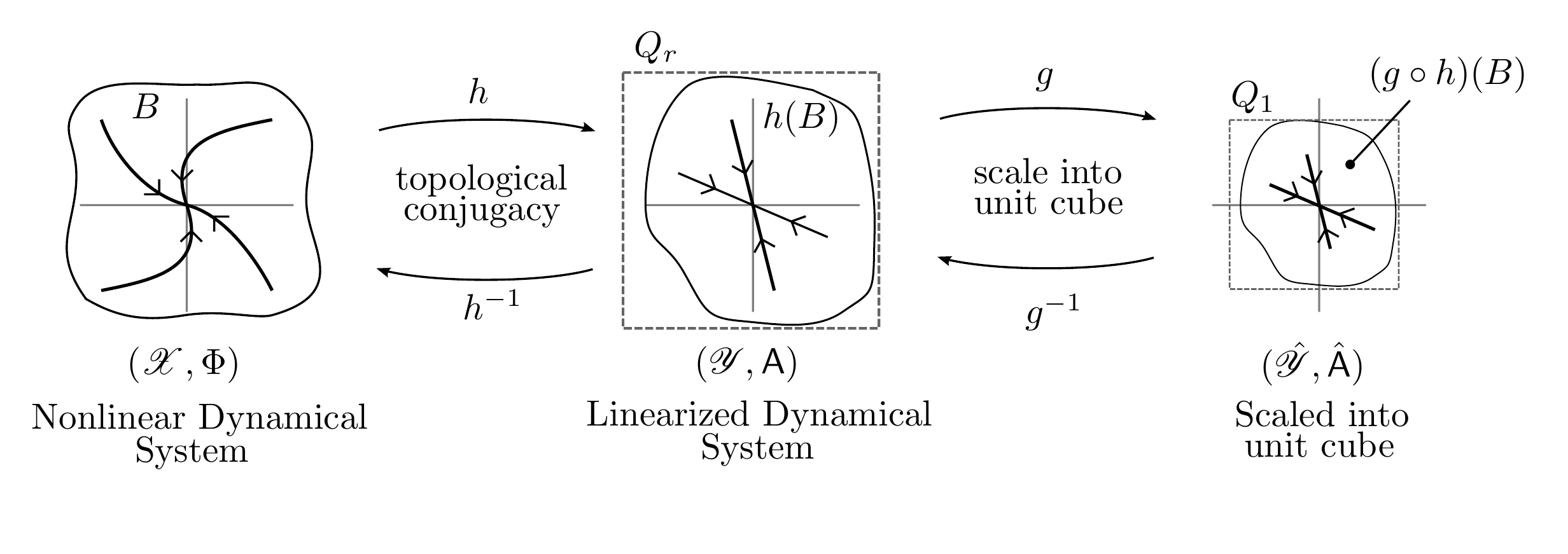}
\caption{Chain of topological conjugacies used to construct eigenfunctions in the basin of attraction $B$ of the fixed point for the Koopman operator corresponding to the nonlinear system.  The existence if $h : B \to h(B)$ is guaranteed by a theorem in \cite{Lan:2012vw}. }
\label{fig:conjugacy-chain}
\end{center}
\end{figure}

\subsection{Polynomials Over Normed Commutative Rings.}
Let $(\mscr R,+,\cdot)$ be a normed unital commutative ring with the norm $\norm{\cdot}$.

\begin{defn}
The space of polynomials over the ring $\mscr R$ in the indeterminates $x_{1},\dots, x_{d}$ is defined by
	\begin{equation}\label{eq:polynomial-over-normd-ring}
	\mscr R[x] := \setc{ \sum_{\abs{k} \leq K} \psi_{k}\cdot x^{k} }{ K \in \N_{0}, \psi_{k} \in \mscr R},
	\end{equation}
where $x = (x_{1}, \dots, x_{d})$, $k = (k_{1},\dots, k_{d}) \in \N_{0}^{d}$, $\abs{k} := \sum\limits_{j=1}^{d} \abs{k_{j}}$, and $x^{k} := x_{1}^{k_{1}}\cdots x_{d}^{k_{d}}$.
\end{defn}
These are just regular polynomials with coefficients taking values in $\mscr R$ rather than $\R$ or $\C$.  The space $\mscr R[x]$ is a normed unital commutative ring under the normal operations of addition and convolution products for polynomials.  The space $\mscr R[x]$ is isomorphic to the sequence space $c_{00}(\mscr R^{\N_{0}^{d}})  = \setc{ \set{\psi_{k}}_{k \in \N_{0}^{d} } }{ \psi_{k} \in \mscr R, F\subset \N_{0}^{d}\text{ finite}, \forall k \in F^{c}, \psi_{k} = 0}$ under the ring isomorphism $i : \mscr R[x] \to c_{00}(\mscr R^{N_{0}^{d}})$ given by $i( \sum_{k \in \N_{0}^{d}} \psi_{k}\cdot x^{k} ) = \set{\psi_{k}}_{k \in \N_{0}^{d}}$.  Only finitely many terms of the sequence are nonzero.  The norm $\norm{\cdot}$ on $\mscr R$ induces a norm on $\mscr R[x]$:
	\begin{equation}\label{eq:polynomial-norm}
	\norm{  \sum_{k \in \N_{0}^{d}} \psi_{k} x^{k} }_{\mscr R,2} =  ( \sum_{k \in \N_{0}^{d}} \norm{ \psi_{k} }^{2} )^{1/2}.
	\end{equation}
Expression \eqref{eq:polynomial-norm} gives a norm for the sequence space as well.  Under this norm, the isomorphism $i$ is an isometric isomorphism.
Define the coordinate projections $\pi_{m} : \mscr R [x] \to \mscr R$ by
	\begin{equation}\label{eq:coordinate-projections}
	\pi_{m} \left( \sum_{k \in \N_{0}^{d}} \psi_{k} x^{k}  \right) = \psi_{m}, \qquad (\forall m \in \N_{0}^{d}) .
	\end{equation}

\begin{defn}
Let $\B$ be a separable reflexive Banach space under the norm $\norm{\cdot}$.  Let
	\begin{equation}
	\ell^{2}\left(\B^{\N_{0}^{d}} \right) 
	:= \setc{ \set{\psi_{k}}_{k \in \N_{0}^{d} } }{ \psi_{k} \in \B, \sum_{k \in \N_{0}^{d} } \norm{\psi_{k}}^{2} < \infty} .
	\end{equation}
\end{defn}

\begin{defn}[$\B$-Hardy space]\label{def:bhardy-space}
If $\B$ a separable reflexive Banach space under the norm $\norm{\cdot}$ and $(\mscr R,+)$ is a dense subspace of $\B$, denote the completion of $\mscr R[x]$ under the $\ell^{2}$ polynomial norm \eqref{eq:polynomial-norm} by $\bhardy$.  The completion is 
	\begin{equation}\label{eq:bhardy-space}
	\bhardy = \setc{ \sum_{k \in \N_{0}^{d}} \psi_{k} x^{k} }{  \psi_{k} \in \B, \sum_{k \in \N_{0}^{d}} \norm{\psi_{k}}^{2} < \infty} .
	\end{equation}
\end{defn}

\begin{lem}\label{lem:polynomial-completion}
Let $(\mscr R,+)$ be a dense subspace of a separable reflexive Banach space $\B$ under the norm $\norm{\cdot}$.  Then $\bhardy$ is isometrically isomorphic to $\ell^{2}\left(\B^{\N_{0}^{d}} \right)$.
\end{lem}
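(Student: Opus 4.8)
The plan is to identify $\bhardy$, which by Definition \ref{def:bhardy-space} is the completion of $\mscr R[x]$ under the norm \eqref{eq:polynomial-norm}, with $\ell^{2}(\B^{\N_{0}^{d}})$ by appealing to the uniqueness of completions. Recall that the completion of a normed space $Y$ is characterized, up to isometric isomorphism, as any Banach space $X$ admitting an isometric embedding $Y \hookrightarrow X$ with dense range. Hence it suffices to exhibit $\ell^{2}(\B^{\N_{0}^{d}})$ as such a space for $Y = \mscr R[x]$.

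First I would pass to sequences. The coordinate map $i : \mscr R[x] \to c_{00}(\mscr R^{\N_{0}^{d}})$ is an isometric isomorphism for the norm \eqref{eq:polynomial-norm}, as noted in the text, so the completion of $\mscr R[x]$ coincides with that of $c_{00}(\mscr R^{\N_{0}^{d}})$. Since the coefficients of any element of $\mscr R[x]$ lie in $\mscr R \subset \B$ and all but finitely many vanish, the norm on $c_{00}(\mscr R^{\N_{0}^{d}})$ is exactly the restriction of the $\ell^{2}$ norm of $\ell^{2}(\B^{\N_{0}^{d}})$; thus the inclusion $c_{00}(\mscr R^{\N_{0}^{d}}) \hookrightarrow \ell^{2}(\B^{\N_{0}^{d}})$ is an isometry. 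It then remains to check that the target is complete and that this inclusion has dense range.

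Completeness of $\ell^{2}(\B^{\N_{0}^{d}})$ is the standard fact that an $\ell^{2}$-direct sum of copies of a Banach space is again Banach: for a Cauchy sequence $\{\psi^{(n)}\}_{n}$ in $\ell^{2}(\B^{\N_{0}^{d}})$, each coordinate $\{\psi^{(n)}_{k}\}_{n}$ is Cauchy in $\B$ and so converges to some $\psi_{k}\in\B$ by completeness of $\B$, and a routine estimate on the tails shows $\{\psi_{k}\}\in\ell^{2}(\B^{\N_{0}^{d}})$ with convergence in the $\ell^{2}$ norm. For density I would use a two-step approximation. Given $\{\psi_{k}\}\in\ell^{2}(\B^{\N_{0}^{d}})$ and $\eps>0$, first choose a finite set $F\subset\N_{0}^{d}$ with $\sum_{k\notin F}\norm{\psi_{k}}^{2}<\eps^{2}/4$, which truncates $\{\psi_{k}\}$ to a finitely supported $\B$-valued sequence within $\eps/2$; then, since $\mscr R$ is dense in $\B$, replace each of the finitely many coefficients $\psi_{k}$ ($k\in F$) by some $r_{k}\in\mscr R$ with $\norm{\psi_{k}-r_{k}}^{2}<\eps^{2}/(4\abs{F})$, producing an element of $c_{00}(\mscr R^{\N_{0}^{d}})$ within $\eps$ of $\{\psi_{k}\}$.

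Having placed $c_{00}(\mscr R^{\N_{0}^{d}})$ isometrically and densely inside the Banach space $\ell^{2}(\B^{\N_{0}^{d}})$, uniqueness of the completion yields the isometric isomorphism $\bhardy \cong \ell^{2}(\B^{\N_{0}^{d}})$, which is the assertion. The only genuine work is the two-step density estimate, in which separating the tail truncation from the coefficientwise $\B$-approximation is the point to get right; completeness and the isometry of the inclusion are routine. I note that this lemma uses only the completeness of $\B$ and the density of $\mscr R$ in $\B$ — reflexivity and separability of $\B$ are not required here, being reserved for the local sequential weak compactness established in proposition \ref{prop:lswc}.
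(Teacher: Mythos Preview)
Your proof is correct and follows essentially the same approach as the paper: both arguments embed $\mscr R[x] \cong c_{00}(\mscr R^{\N_{0}^{d}})$ isometrically into $\ell^{2}(\B^{\N_{0}^{d}})$, establish density by first truncating to finitely many coordinates and then approximating each coefficient via the density of $\mscr R$ in $\B$, and conclude by uniqueness of the completion. Your write-up is more explicit about completeness of $\ell^{2}(\B^{\N_{0}^{d}})$ and about the two-step density estimate, and your remark that separability and reflexivity of $\B$ play no role here is correct.
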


\begin{proof}
Let $c_{00}(\B^{\N_{0}^{d}})$ be the subspace of $\ell^{2}\left(\B^{\N_{0}^{d}} \right)$ consisting of elements having only finitely many nonzero terms.  Since $(\mscr R,+)$ is dense in $\B$, then $\mscr R[x]$ is isometrically isomorphic to a subspace dense in $c_{00}(\B^{\N_{0}^{d}})$ .  Since $c_{00}(\B^{\N_{0}^{d}})$ is dense in $\ell^{2}\left(\B^{\N_{0}^{d}} \right)$, then $\mscr R[x]$ is also isometrically isomorphic to a subspace dense in $\ell^{2}\left(\B^{\N_{0}^{d}} \right)$.
\end{proof}

\begin{prop}\label{prop:lswc}
Let $(\mscr R, +)$ be a dense subspace of a separable, reflexive Banach space.  Then \bhardy is locally sequentially weakly compact.
\end{prop}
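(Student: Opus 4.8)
The plan is to reduce the entire statement to a fact about the model space $\ell^{2}(\B^{\N_{0}^{d}})$ and then invoke the principle, recorded in the remarks following Yosida's mean ergodic theorem, that a separable reflexive Banach space is locally sequentially weakly compact (via the sequential Banach--Alaoglu theorem). By Lemma \ref{lem:polynomial-completion}, $\bhardy$ is isometrically isomorphic to $\ell^{2}(\B^{\N_{0}^{d}})$. Local sequential weak compactness is preserved under isometric isomorphism: such a map $T$ is a linear homeomorphism, its adjoint $T^{*}$ is an isomorphism of the duals, and consequently $T$ carries bounded sequences to bounded sequences and weakly convergent subsequences to weakly convergent subsequences in both directions. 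Hence it suffices to show that $\ell^{2}(\B^{\N_{0}^{d}})$ is separable and reflexive.

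First I would check separability. Since $\B$ is separable, fix a countable dense subset $D \subset \B$. The index set $\N_{0}^{d}$ is countable, so the collection of finitely supported sequences taking values in $D$ is countable; a routine truncation-and-approximation argument in the $\ell^{2}$ polynomial norm shows this collection is dense in $\ell^{2}(\B^{\N_{0}^{d}})$. Thus the space is separable.

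Next I would establish reflexivity, which is the substantive point. The space $\ell^{2}(\B^{\N_{0}^{d}})$ is the $\ell^{2}$-direct sum of countably many copies of the reflexive space $\B$. I would identify its dual with $\ell^{2}((\B^{*})^{\N_{0}^{d}})$ through the pairing $\langle \{\psi_{k}\}, \{\phi_{k}\}\rangle = \sum_{k} \phi_{k}(\psi_{k})$, the series converging absolutely by Cauchy--Schwarz because the conjugate exponent of $2$ is again $2$. Iterating this identification, the bidual is $\ell^{2}((\B^{**})^{\N_{0}^{d}})$. Since $\B$ is reflexive, the canonical embedding $\B \hookrightarrow \B^{**}$ is surjective, and one verifies that this identification is compatible coordinatewise with the canonical embedding of $\ell^{2}(\B^{\N_{0}^{d}})$ into its own bidual, so the latter embedding is onto as well. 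Hence $\ell^{2}(\B^{\N_{0}^{d}})$ is reflexive.

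With separability and reflexivity in hand, the sequential Banach--Alaoglu theorem (equivalently, Kakutani's characterization of reflexivity together with the Eberlein--\v{S}mulian theorem) shows every bounded sequence in $\ell^{2}(\B^{\N_{0}^{d}})$ has a weakly convergent subsequence, which is precisely local sequential weak compactness; transporting this back through the isometric isomorphism of Lemma \ref{lem:polynomial-completion} gives the claim for $\bhardy$. The only step demanding genuine care is the identification of the dual and bidual of the $\ell^{2}$-sum, and in particular the verification that the canonical embedding into the bidual acts coordinatewise so that surjectivity of $\B \hookrightarrow \B^{**}$ lifts to the sum; the separability and transfer arguments are otherwise routine.
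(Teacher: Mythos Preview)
Your proposal is correct and follows essentially the same route as the paper: both arguments reduce the question to showing that $\bhardy$ (equivalently $\ell^{2}(\B^{\N_{0}^{d}})$) is a separable reflexive Banach space and then invoke the sequential Banach--Alaoglu theorem. The only difference is that the paper dispatches separability and reflexivity of the $\ell^{2}$-sum by citation (to Wojtaszczyk), whereas you prove these facts directly by identifying the dual and bidual coordinatewise; your version is thus a more self-contained expansion of the same argument.
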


\begin{proof}
Since $\B$ is separable and reflexive, so is \bhardy (\cite{Wojtaszczyk1991}, p.44).  Fix $\psi \in \bhardy$ and consider the closed neighborhood $U = \setc{ \phi \in \bhardy }{ \norm{\phi - \psi}_{\mscr R,2} \leq \eps}$.  This is a bounded neighborhood of $\psi$.  Since \bhardy is both a separable and reflexive Banach space, the sequential version of the Banach-Alaoglu theorem implies that this neighborhood is sequentially weakly compact.  Since this is true for every $\psi$, then \bhardy is locally sequentially weakly compact.
\end{proof}

In this paper, $\B$ will be a separable reflexive Banach space of functions whose domain is the attractor of the dynamical system.  The normed ring $\mscr R$ will be a dense subspace of $\B$ where the ring multiplication is  given by pointwise products of functions.

We should remark that while \bhardy is a well-defined Banach space, elements of it are not necessarily well-defined observables on $\X$ even though all polynomials in $\mscr R[x]$ are.  For example, let $\X = \R^{d}$.  Consider a point of $\R^{d}$ having at least one coordinate $x_{i}$ satisfying $\abs{ x_{i} } > 1$.  Then there are infinitely many elements of \bhardy such that the infinite series diverges if we substitute the above point in for the indeterminates.  Hence, not all elements of \bhardy define finite-valued functions on $\R^{d}$.  On the other hand, if $\X$ is contained strictly in the open unit cube of $\R^{d}$, then every series in \bhardy is convergent, since $\abs{x_{i}} < 1$ for every point $x = (x_{1},\dots, x_{d})$.   If we restrict our attention to bounded subsets $B$ in $\R^{d}$ containing the origin, by a change of variables $g$ which maps $B$ into a set $g(B)$ that is contained strictly in the unit cube, we get a well-defined space of observables $\bhardy \circ g$, where \bhardy is defined on the open unit cube.

\subsection{Spaces of Observables for Bounded attractors}\label{sec:examples}

\subsubsection{Asymptotically stable fixed points in $\mathbb{C}^{d}$.}\label{subsubsec:stable-fixed-point.}

We start with an example of a diagonalizable linear system.

\begin{example}[Stable diagonalizable linear system]\label{ex:generated-polynomials-over-Banach-algebra}
Let $\mscr Y = \C^d$ with the Euclidean norm and $\msf A$ a $d\times d$ diagonalizable matrix.  Define the dynamics as $\mbf y_{k+1} = \msf A \mbf y_{k}$.  Let $\set{\mbf v_1, \dots, \mbf v_d}$ be a basis of eigenvectors of $\msf A$ corresponding to nonzero eigenvalues $\set{\lambda_j}_{1}^{d}$.  Then $\mbf y = \sum_{1}^{d} c_j \mbf v_j$.  

Let $\set{ \mbf w_{j} }_{j=1}^{d}$ be the adjoint basis to $\set{ \mbf v_{j}}_{j=1}^{d}$; then $\inner{\mbf v_{j}}{\mbf w_{k}} = \delta_{jk}$ and $\mbf w_{j}$ is an eigenvector of $\msf A^{*}$ at eigenvalue $\bar{\lam}_{j}$.  Define observables by the linear functionals
	\begin{equation}\label{eq:principle-eigenfunctions-linear-dynamics}
	\phi_j(\mbf y) = \inner{\mbf y}{\mbf w_j}
	\end{equation}
for all $\mbf y$ and $j=1,\dots, d$.  The observable $\phi_j$ is a nonzero eigenfunction of $U_{\msf A}$ since
	\begin{align*}
	(U_{\msf A}\phi_j)(\mbf y) = \phi_j(\msf A\mbf y) &= \inner{\msf A\mbf y}{\mbf w_j} 
	= \inner{\mbf y}{\msf A^{*}\mbf w_j} 
	= \inner{\mbf y}{\bar{\lambda}_j \mbf w_j} 
	= \lambda_j \phi_j(\mbf y).
	\end{align*}
Additionally, the product $(\phi_1 \cdots \phi_d)$ is not equivalently the zero functional.  Therefore, the semigroup property for eigenfunctions (proposition 5, \cite{Budisic:2012cf}) implies that for any $(m_1,\dots, m_d) \subset \N_0^d$,
	\begin{equation}\label{eq:eigenpairs-linearized-dynamics}
	\left(\prod_{j=1}^{d} \lambda_{j}^{m_j}, \, \prod_{j=1}^{d} \phi_{j}^{m_j} \right)
	\end{equation}
is an eigenpair for the corresponding Koopman operator.

Let $\mc G = \setc{ \phi_j : \mscr Y \to \C }{ \forall  j=1,\dots, d }$, be called the principle eigenfunctions of $U_{\msf A}$ defined in \eqref{eq:principle-eigenfunctions-linear-dynamics} above.  Define the sets of functions 
	\begin{equation*}
	\mscr P_{\mc G} := \setc{ c : \mscr Y \to \C }{ c(\mbf y) = \prod_{j=1}^{d} \phi_{j}^{m_{j}}(\mbf y), (m_{1},\dots,m_{d}) \subset \N_{0}^{d} }
	\end{equation*}
and
	\begin{equation}\label{eq:fixed-point-ring}
	\mscr R[\hat{\mbf y}] := \setc{ \sum_{i=1}^{n} \alpha_{i} c_{i}(\mbf y) }{ n \in \N, \alpha_{i} \in \C, c_{i} \in \mscr P_{\mc G} }.
	\end{equation}
Then, $\mscr R[\hat{\mbf y}]$, where $\hat{\mbf y} = ( \phi_{1}(\mbf y), \dots, \phi_{d}(\mbf y))$, is the space of polynomials over the normed unital commutative ring $L^{2}(\C^{d},\delta_{0})$, where $\delta_{0}$ is the Dirac measure supported at $\mbf y=0$.  The ring $\mscr R[\hat{\mbf y}]$ is isomorphic to $\C[\hat{\mbf y}]$.  Furthermore, $\mscr R[\hat{\mbf y}]$ is the space of finite linear combinations of eigenfunctions of the Koopman operator corresponding to the dynamical system $(\mscr Y, \msf A)$.

To see this, put $\msf V = [\mbf v_{1}, \cdots, \mbf v_{d}]$ and $\msf W = [\mbf w_{1}, \cdots, \mbf w_{d}]$.  Define the new coordinates $\hat{\mbf y} = [\hat y_{1}, \dots, \hat y_{d}]^{\msf T}$ by the mapping $\hat{\mbf y} = g( \mbf y ) = \msf W^{*} \mbf y$, so that, in particular, $\hat y_{j} = \phi_{j}(\mbf y)$.  This is invertible since $\msf W$ is and the inverse is given by $\mbf y = g^{-1}(\hat{\mbf y}) = \msf V \hat{\mbf y}$.  By definition of the functions $c_{i} \in \mscr P_{\mc G}$,  $c_{i}(\mbf y) = \prod_{j=1}^{d} \phi_{j}^{m_{j,i}}(\mbf y) = \prod_{j=1}^{d} \hat{y}_{j}^{m_{j,i}} = \hat{\mbf y}^{\mbf m_{i}}$ for $\mbf m_{i} \in \N_{0}^{d}$.  Since $L^{2}(\C^{d},\delta_{0}) \cong \C$, then every element of $\mscr R[\hat{\mbf y}]$ has the form $\sum_{i=1}^{n} \alpha_{i} \hat{\mbf y}^{\mbf m_{i}} $ which is a polynomial in the indeterminates $\hat y_{1}, \dots, \hat y_{d}$ with coefficient in $\C \cong L^{2}(\C^{d},\delta)$.  Therefore, $\mscr R[\hat{\mbf y}]$ is the space of polynomials over the normed unital commutative ring $L^{2}(\delta_{0})$ and $\mscr R[\hat{\mbf y}] \cong \C[\mbf{\hat{y}}]$.
The final result follows directly from $\mscr P_{\mc G}$ consisting of eigenfunctions of the Koopman operator and the definition of $\mscr R[\hat{\mbf y}]$.
\end{example}

\begin{prop}
If $\mscr R[\mbf{ \hat y}]$ is the ring of polynomials \eqref{eq:fixed-point-ring}, then the associated Koopman operator $U_{\msf A}$ leaves $\mscr R_{K}[\mbf{ \hat y}]$ invariant for all $K \in \N_{0}$.
\end{prop}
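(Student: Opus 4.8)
The plan is to exploit the fact, already established in Example \ref{ex:generated-polynomials-over-Banach-algebra}, that $U_{\msf A}$ acts diagonally on the monomial generators of $\mscr R[\hat{\mbf y}]$. First I would fix the convention that $\mscr R_{K}[\hat{\mbf y}]$ denotes the subspace spanned by those products $c = \prod_{j=1}^{d} \phi_{j}^{m_{j}}$ of total degree $\abs{\mbf m} = \sum_{j=1}^{d} m_{j} \leq K$, so that a typical element is a finite sum $\psi = \sum_{i} \alpha_{i} c_{i}$ with each generator $c_{i} \in \mscr P_{\mc G}$ of degree at most $K$. This is the natural reading of the subscript $K$ as a degree bound, consistent with $\mscr R[\hat{\mbf y}]$ denoting the full ring \eqref{eq:fixed-point-ring}.

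Next I would record the two structural facts supplied by the example. Because $U_{\msf A}$ is a composition operator it is linear, and by the eigenpair formula \eqref{eq:eigenpairs-linearized-dynamics} each generator $c_{i} = \prod_{j=1}^{d} \phi_{j}^{m_{j,i}}$ is itself an eigenfunction,

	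\[
	U_{\msf A} c_{i} = \Big( \prod_{j=1}^{d} \lambda_{j}^{m_{j,i}} \Big) c_{i} .
	\]

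The crucial observation is that $U_{\msf A}$ sends each such monomial to a \emph{scalar multiple of itself}; the scaling alters the coefficient but not the monomial, and in particular preserves its degree $\abs{\mbf m_{i}}$ exactly.

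The invariance then follows immediately by linearity: for $\psi = \sum_{i} \alpha_{i} c_{i} \in \mscr R_{K}[\hat{\mbf y}]$,

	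\[
	U_{\msf A} \psi = \sum_{i} \alpha_{i}\, U_{\msf A} c_{i} = \sum_{i} \alpha_{i} \Big( \prod_{j=1}^{d} \lambda_{j}^{m_{j,i}} \Big) c_{i} ,
	\]

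which is again a finite linear combination of the same generators $c_{i}$, each of degree at most $K$. Hence $U_{\msf A}\psi \in \mscr R_{K}[\hat{\mbf y}]$.

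I do not anticipate a genuine obstacle: the entire content is that the principle eigenfunctions and their products diagonalize $U_{\msf A}$, so the operator acts on the monomial basis purely by eigenvalue scaling, which respects the degree grading. The only point demanding a word of care is the interpretation of the subscript $K$ as a degree bound and the remark that diagonal scaling keeps each term inside its own degree-graded piece; once that is noted, the result drops out of linearity with no estimation required.
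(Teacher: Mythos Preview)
Your proposal is correct and matches the paper's own proof essentially line for line: both fix the interpretation of $\mscr R_{K}[\hat{\mbf y}]$ as polynomials of total multi-degree at most $K$, invoke the eigenpair identity \eqref{eq:eigenpairs-linearized-dynamics} so that $U_{\msf A}$ sends each monomial $\phi_{1}^{k_{1}}\cdots\phi_{d}^{k_{d}}$ to the scalar multiple $\lam_{1}^{k_{1}}\cdots\lam_{d}^{k_{d}}$ of itself, and conclude by linearity that the degree bound is preserved.
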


\begin{proof}
Recall $\mscr R_{K}[\hat{ \mbf y}]$ is the space of polynomials having multi-degree modulus no greater than $K$.  This space has the form
	\begin{equation*}
	\mscr R_{K}[\hat{ \mbf y}] = \set{ \sum_{\abs{\mbf k}\leq K} \psi_{\mbf k} (\hat{y}_{1}^{k_{1}}\cdots \hat{y}^{k_{d}}) } = \set{ \sum_{\abs{\mbf k}\leq K} \psi_{\mbf k} \left(\phi_{1}^{k_{1}}\cdots \phi_{d}^{k_{d}}\right)(\mbf y) },
	\end{equation*}
where $\phi_{j}$ are the eigenfunctions in \eqref{eq:principle-eigenfunctions-linear-dynamics} of the Koopman operator and $\psi_{\mbf k} \in \C$.  Then
	\begin{align*}
	U\left[\sum_{\abs{\mbf k}\leq K} \psi_{\mbf k} \left(\phi_{1}^{k_{1}}\cdots \phi_{d}^{k_{d}}\right)(\mbf y)\right]
	&= \sum_{\abs{\mbf k}\leq K} \psi_{\mbf k} \left(\lam_{1}^{k_{1}}\cdots \lam_{d}^{k_{d}}\right) \left(\phi_{1}^{k_{1}}\cdots \phi_{d}^{k_{d}}\right)(\mbf y) \\
	&= \sum_{\abs{\mbf k}\leq K} \hat{\psi}_{\mbf k} \left(\phi_{1}^{k_{1}}\cdots \phi_{d}^{k_{d}}\right)(\mbf y) \\
	&= \sum_{\abs{\mbf k}\leq K} \hat{\psi}_{\mbf k} (\hat{y}_{1}^{k_{1}}\cdots \hat{y}^{k_{d}}).
	\end{align*}
where $\hat{\psi}_{\mbf k} :=  \left(\lam_{1}^{k_{1}}\cdots \lam_{d}^{k_{d}}\right)\cdot \psi_{\mbf k}$ is in $\C$, since $\psi_{k} \in \C$.
\end{proof}

\begin{cor}\label{cor:U-spectral-linear-fixed-point}
$U_{\msf A} : \bhardy \to \bhardy$ is spectral, where \bhardy is the completion of the ring \eqref{eq:fixed-point-ring} under the norm \eqref{eq:polynomial-norm}.
\end{cor}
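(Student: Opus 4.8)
The plan is to exhibit $U_{\msf A}$ explicitly as a scalar-type spectral operator by diagonalizing it against the natural orthonormal basis of $\bhardy$ and reading off an atomic spectral measure supported on its eigenvalues. First I would observe that in this example the coefficient ring is $L^{2}(\C^{d},\delta_{0}) \cong \C$, so $\B = \C$ is separable and reflexive and, by Lemma \ref{lem:polynomial-completion}, $\bhardy$ is isometrically isomorphic to $\ell^{2}(\C^{\N_{0}^{d}})$. In particular $\bhardy$ is a separable Hilbert space whose natural orthonormal basis is the family of monomials $e_{\mbf m} := \hat{\mbf y}^{\mbf m} = \prod_{j=1}^{d} \phi_{j}^{m_{j}}$, $\mbf m \in \N_{0}^{d}$; orthonormality is immediate from the $\ell^{2}$ polynomial norm \eqref{eq:polynomial-norm}. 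By the eigenpair computation \eqref{eq:eigenpairs-linearized-dynamics} each $e_{\mbf m}$ is an eigenvector, $U_{\msf A} e_{\mbf m} = \gamma_{\mbf m} e_{\mbf m}$ with $\gamma_{\mbf m} = \prod_{j} \lam_{j}^{m_{j}}$, and since the fixed point is stable, $\abs{\lam_{j}} < 1$, so $\abs{\gamma_{\mbf m}} \leq 1$ with $\gamma_{\mbf 0} = 1$. Hence $U_{\msf A}$ is a bounded diagonal operator with $\norm{U_{\msf A}} = \sup_{\mbf m}\abs{\gamma_{\mbf m}} = 1$; the preceding proposition already guarantees it preserves each $\mscr R_{K}[\hat{\mbf y}]$, and diagonality extends it to all of $\bhardy$.

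Next I would build the spectral measure directly on $\C$. Writing $a_{j} := -\log\abs{\lam_{j}} > 0$, for any fixed value $\gamma$ the set $\set{\mbf m : \gamma_{\mbf m} = \gamma}$ lies inside $\set{\mbf m : \sum_{j} a_{j} m_{j} = -\log\abs{\gamma}}$, which is finite; thus every eigenvalue has finite multiplicity and the only accumulation point of the spectrum is $0$. For a Baire set $\Gamma \subset \C$ I would define $E(\Gamma)$ to be the orthogonal projection of $\bhardy$ onto $\overline{\lin}\set{ e_{\mbf m} : \gamma_{\mbf m} \in \Gamma}$. Taking $\mscr S = \C$, $\Sigma$ the Baire sets, $\mu = E$, and $f$ the identity, this is a spectral measure triple in the sense of Definition \ref{defn:spectral-measure}: $E(\C) = I$ because every $e_{\mbf m}$ has $\gamma_{\mbf m} \in \C$; the multiplicative property $E(\Gamma_{1} \cap \Gamma_{2}) = E(\Gamma_{1}) E(\Gamma_{2})$ holds because these are commuting coordinate projections selecting basis vectors by membership of $\gamma_{\mbf m}$; and countable additivity in the weak (indeed strong) operator topology follows from orthogonality of the summand subspaces together with the $\ell^{2}$ summability of the coordinates of each $\psi \in \bhardy$. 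Since the projections are orthogonal they are uniformly bounded, so the triple is equicontinuous and the spectral integral of Proposition \ref{prop:equicontinuous-family} and representation \eqref{spectral-operator} are well-defined.

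Finally I would verify the representation $U_{\msf A} = \int_{\C} z\, E(dz)$. It suffices to check it on the orthonormal basis: for each $\mbf m$, $\left(\int_{\C} z\, E(dz)\right) e_{\mbf m} = \gamma_{\mbf m} e_{\mbf m} = U_{\msf A} e_{\mbf m}$, after which the two bounded operators agree on the dense span of the $e_{\mbf m}$ and hence everywhere. This exhibits $U_{\msf A}$ as the spectral element $\int_{\C} z\, E(dz)$, i.e. a scalar-type spectral operator, which is the claim. I expect no deep obstacle here; the only points requiring genuine care are (i) confirming that the monomials form an orthonormal \emph{basis} of the completion rather than merely a spanning set --- this is exactly the content of Lemma \ref{lem:polynomial-completion} --- and (ii) justifying countable additivity of $E$ and the integral representation in the presence of the eigenvalue accumulation at $0$, which the orthogonality of the eigenspaces and the diagonal action of $U_{\msf A}$ resolve cleanly.
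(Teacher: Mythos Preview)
Your argument is correct and is essentially the construction the paper has in mind: the corollary is stated without proof, but the very next proposition (for the nonlinear system) and the limit-cycle analog build the spectral measure in exactly the way you do, by taking $E(\Lambda)$ to be the projection onto the span of those monomial eigenfunctions whose eigenvalues lie in $\Lambda$ and checking equiboundedness via the $\ell^{2}$ coefficient norm. Your additional observations about finite multiplicity and the accumulation at $0$ are not needed for the bare spectral claim but are consistent with the paper's later use of the dominating-point-spectrum hypothesis.
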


\paragraph{Construction of spaces observables of an asymptotically stable fixed point of a nonlinear dynamical system.}
Assume $\Phi(x) = \msf Ax + v(x)$, where $\msf A := D\Phi(0)$ is the linearization of $\Phi$ around the origin, $\msf A$ is diagonalizable and all its eigenvalues are contained strictly inside the unit circle, and $v :\R^{d} \to \R^{d}$ is a $C^{2}$ function.  Then $(\R^{d},\Phi)$ is a nonlinear dynamical system having an asymptotically stable fixed point at $0$.  Let $D\subset \R^{d}$ be the basin of attraction for the fixed point; it is possibly unbounded.  Let $(\R^{d}, \msf A)$ be the linearization of the dynamics around the fixed point.  Due to a theorem of Lan and Mezi\'{c} (thm.\ 2.4, \cite{Lan:2012vw}), there is a diffeomorphism $h : D \to h(D)$ under which the nonlinear and linear dynamics are topologically conjugate.  Let $B \subset D$ be a bounded, simply connected, positively invariant, open set containing the origin and having smooth boundary.  Then $h(B)$ is a bounded, simply connected, positively invariant open set containing the origin and having a $C^{1}$-boundary.  Since this set is bounded there is a $d$-dimensional open cube $Q_{r}$ containing $h(B)$.  There is a smooth diffeomorphism $g : \R^{d} \to \R^{d}$ mapping $Q_{r}$ onto the open unit cube $Q_{1}$ in $\R^{d}$ (namely just scaling each coordinate).  Since $(g \circ h)(B)$ is open, then every point $\hat{ \mbf y} \in (g \circ h)(B)$ has coordinates whose modulus is strictly smaller than 1 (see fig.\ \ref{fig:conjugacy-chain} to visualize these sets).

Let $\mscr R[\hat{\mbf y}]$ be the polynomials defined in example \ref{ex:generated-polynomials-over-Banach-algebra} with the indeterminates taking values in $D_{1} := (g \circ h)(B) \subset Q_{1}$.  Take the completion of this space, as in lemma \ref{lem:polynomial-completion}, and denote it as $\bhardy(D_{1})$.  Since all the coordinates have a modulus strictly smaller than 1, this space can be identified with a well-defined Banach space of functions having domain $D_{1} = (g \circ h)(B)$.  Using the conjugacies, we can define a Banach space of observables for the nonlinear dynamical system having domain $B$;
	\begin{equation}
	\F_{\Phi} := \bhardy \circ g \circ h := \setc{ (\psi \circ g \circ h)(\mbf x) }{ \psi \in \bhardy(D_{1}), \mbf x \in B }.
	\end{equation}
By proposition \ref{prop:lswc}, this space is locally sequentially weakly compact Banach space in addition to being separable.

\begin{prop}
Let $U_{\Phi} : \F_{\Phi} \to \F_{\Phi}$ be the Koopman operator associated with $\Phi$.  Assume $\Phi(x) = \msf Ax + v(x)$, where $\msf A$ is a diagonalizable matrix with eigenvalues $\set{\lam_{1},\dots, \lam_{d}}$ that satisfy $\abs{\lam_{i}} < 1$.  Then $U_{\Phi}$ is a spectral operator of scalar type and the spectrum of $U_{\Phi}$ is
	\begin{equation}
	\sigma(U_{\Phi}) = \setc{  \prod_{i=1}^{d} \lam_{i}^{k_{i}}  }{ \forall i \in \set{1,\dots, d},  k_{i} \in \N_{0} }.
	\end{equation}
\end{prop}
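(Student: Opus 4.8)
The plan is to transport the spectral structure of the \emph{linear} Koopman operator across the conjugacy $g\circ h$, using that both "scalar type" and the spectrum are invariants of similarity by a bounded invertible operator. Since $B$ is positively invariant, $\Phi(B)\subseteq B$ and $U_{\Phi}$ is well defined on $\F_{\Phi}$. The first step is to record the intertwining relation. By the very definition of $\F_{\Phi}=\bhardy(D_{1})\circ g\circ h$ and its (pulled-back) norm, the map $C:\bhardy(D_{1})\to\F_{\Phi}$, $C\psi:=\psi\circ g\circ h$, is an isometric isomorphism. Writing $\hat{\msf A}:=g\circ\msf A\circ g^{-1}$ for the linear map obtained by transporting $\msf A$ through the coordinatewise scaling $g$, the conjugacy $h\circ\Phi=\msf A\circ h$ gives $(g\circ h)\circ\Phi=\hat{\msf A}\circ(g\circ h)$, and the one-line computation $U_{\Phi}(C\psi)=\psi\circ g\circ h\circ\Phi=\psi\circ\hat{\msf A}\circ g\circ h=C(U_{\hat{\msf A}}\psi)$ yields $U_{\Phi}=C\,U_{\hat{\msf A}}\,C^{-1}$. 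Because $g$ is linear and invertible, $\hat{\msf A}$ is similar to $\msf A$, hence diagonalizable with the same eigenvalues $\lam_{1},\dots,\lam_{d}$.

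Next I would analyze $U_{\hat{\msf A}}$ on $\bhardy(D_{1})$ directly. By Example \ref{ex:generated-polynomials-over-Banach-algebra} applied to $\hat{\msf A}$ (whose eigenfunction computation is pointwise and so is unaffected by restricting the domain to $D_{1}$), the monomials $\hat{\mbf y}^{\mbf k}$, $\mbf k\in\N_{0}^{d}$, are eigenfunctions with $U_{\hat{\msf A}}\hat{\mbf y}^{\mbf k}=\big(\prod_{i}\lam_{i}^{k_{i}}\big)\hat{\mbf y}^{\mbf k}$; since the ring is $\cong\C$, under the $\ell^{2}$ polynomial norm these monomials form an orthonormal basis of the Hilbert space $\bhardy(D_{1})$. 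Thus $U_{\hat{\msf A}}$ is a bounded diagonal operator with $\norm{U_{\hat{\msf A}}}=\sup_{\mbf k}\abs{\prod_{i}\lam_{i}^{k_{i}}}=1$ (attained at $\mbf k=0$). I would then exhibit its spectral measure explicitly: for Borel $\Gamma\subset\C$ let $E(\Gamma)$ be the orthogonal projection onto the closed linear span of $\setc{\hat{\mbf y}^{\mbf k}}{\prod_{i}\lam_{i}^{k_{i}}\in\Gamma}$. This $E$ is a projection-valued, multiplicative, countably additive spectral measure in the sense of Definition \ref{defn:spectral-measure}, and $U_{\hat{\msf A}}=\int z\,E(dz)$, so $U_{\hat{\msf A}}$ is of scalar type (recovering Corollary \ref{cor:U-spectral-linear-fixed-point}) with $\sigma(U_{\hat{\msf A}})=\operatorname{supp}E=\overline{\setc{\prod_{i}\lam_{i}^{k_{i}}}{\mbf k\in\N_{0}^{d}}}$.

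Finally I would transfer these conclusions through $C$: if $U_{\hat{\msf A}}=\int z\,E(dz)$ is scalar type, then $CEC^{-1}$ is again a (projection-valued, multiplicative, countably additive) spectral measure, $U_{\Phi}=C\,U_{\hat{\msf A}}\,C^{-1}=\int z\,(CEC^{-1})(dz)$, whence $U_{\Phi}$ is a spectral operator of scalar type with $\sigma(U_{\Phi})=\sigma(U_{\hat{\msf A}})$, which is the claimed set of composite eigenvalues.

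The step I expect to be the main obstacle is bookkeeping at the interface of the two diffeomorphisms: one must choose the coordinatewise scaling $g$ compatibly with the eigen-coordinates $\hat y_{j}=\phi_{j}$ so that $\hat{\msf A}$ is genuinely diagonal in the monomial basis and so that $C$ is an isometry onto $\F_{\Phi}$; this requires verifying that the norm on $\F_{\Phi}$ is exactly the push-forward of the $\ell^{2}$ polynomial norm. A secondary, essentially cosmetic point is that, because $\abs{\lam_{i}}<1$, the products $\prod_{i}\lam_{i}^{k_{i}}$ accumulate at $0$; strictly, $\sigma(U_{\Phi})$ is therefore the \emph{closure} of the stated set, i.e. $\setc{\prod_{i}\lam_{i}^{k_{i}}}{\mbf k\in\N_{0}^{d}}$ together with its limit point $0$ (which lies in the continuous, not the point, spectrum).
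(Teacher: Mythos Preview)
Your approach is essentially the same as the paper's: both construct the spectral measure explicitly from the monomial eigenfunction basis, the paper writing $E$ directly on $\F_{\Phi}$ in terms of the pulled-back eigenfunctions $(\phi_{1}\cdots\phi_{d})^{\mbf k}\circ g\circ h$, while you first build it on $\bhardy(D_{1})$ and then push it forward as $CEC^{-1}$ through the isometry $C$. Your closing remark that $0$ must belong to the spectrum as an accumulation point of the eigenvalues (and lies in the continuous, not the point, spectrum) is correct and sharpens a point the paper's statement glosses over.
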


\begin{proof}
Elements of $\F_{\Phi}$ are infinite linear combinations of eigenfunctions of $U_{\Phi}$.  This follows from the construction of $\bhardy$ as infinite linear combinations of eigenfunctions of $U_{\msf A}$, that $\Phi$ and $\msf A$ are topologically conjugate in the entire basin of attraction by theorem 2.4 of \cite{Lan:2012vw}, and proposition 7 of \cite{Budisic:2012cf}.  The spectral measure $E : \sigma(U) \to \mscr L(\F_{\Phi})$ is given by
	\begin{equation}
	E(\Lambda)\psi = \sum_{ \setc{\mbf k \in \N_{0}^{d}}{ \mbf \lam^{\mbf k} \in \Lambda}  } \alpha_{\mbf k} \cdot \left[ (\phi_{1}\cdots \phi_{d})^{\mbf k} \circ g \circ h \right], \qquad (\psi \in \F_{\Phi}),
	\end{equation}
where $\Lambda \in \mc B(\C)$, $\psi = \sum\limits_{ {\mbf k \in \N_{0}^{d}} } \alpha_{\mbf k} \cdot \left[ (\phi_{1}\cdots \phi_{d})^{\mbf k} \circ g \circ h \right]$, and $(\phi_{1}\cdots \phi_{d})^{\mbf k} = (\phi_{1}^{k_{1}}\cdots \phi_{d}^{k_{d}})$ for $\mbf k = (k_{1},\dots, k_{d})$.  Since the norm on this space is given by the $\ell^{2}$-norm on coefficients, then its easy to show that $\norm{ E } \leq 1$, so that, in particular, the projections defined by the spectral measure form an equicontinuous family of operators.  It is straight forward to verify all the remaining properties spectral measures.
\end{proof}

Since $U_{\Phi} : \F_{\Phi} \to \F_{\Phi}$ is spectral and has dominating point spectrum concentrated on isolated rings, the GLA theorems can be applied in this space.

\begin{rem}\label{rem:hardy-space}
Put $d =1$ in example \ref{ex:generated-polynomials-over-Banach-algebra}.  It can be shown that $\phi(y) = y$ is an eigenfunction of the Koopman operator for $\lam = \msf A$.  This implies that $\phi_{k}(y) = y^{k}$ is also an eigenfunction at eigenvalue $\lam^{k}$.  Then elements of the completion $\bhardy$ of $\mscr R[\hat{\mbf{y}}]$, identified with a space of observables on $\D$, is 
	\begin{equation*}
	H^{2}_{\C}(\D) = \setc{ \sum_{k=0}^{\infty} \alpha_{k} y^{k} }{\alpha_{k} \in \C, \sum_{k\geq 0} \abs{\alpha_{k}}^{2} < \infty, y \in \D },
	\end{equation*}
since $\B = \C$.
This is the Hardy Hilbert space of observables $H^{2}(\D)$.  The spectrum of the Koopman operator on this space is
	\begin{equation}\label{eq:linearized-spectrum}
	\setc{ \lam^{k} }{ k = 0, 1, 2, \dots},
	\end{equation}
For a nonlinear dynamical system topologically conjugate to this linear one, we get as the pullback space of observables
	\begin{equation}\label{eq:d1-pullback-space}
	\F = H^{2}_{\C}(\D) \circ g \circ h,
	\end{equation}
where $g$ and $h$ are the same conjugacy maps from above.  The composition operator on this space is spectral.   Furthermore, by Poincar\'{e}'s linearization theorem \cite{Arnold:1988wx}, if the nonlinear dynamical system is a holomorphic diffeomorphism with a fixed point at zero and the linearization of the map at the fixed point has modulus strictly less than 1, then the conjugacy map $g\circ h$ is biholomorphic in a neighborhood of zero.  As long as for all $n\geq 0$, $f^{n}$ is invertible on its image and the inverse is analytic, the local conjugacy can be extended biholomorphically to the entire disc \cite{Lan:2012vw}.  Then the map $f \mapsto f \circ g \circ h$ defines a composition operator on $H^{2}(\D)$ (thm.\ 3.2.1(i), \cite{Singh:1993vt}).  Additionally, by part (iii) of the same theorem, this composition operator is a bijection.  Therefore, the pullback space of observables $\F$ for the analytic map is equivalent to $H^{2}(\D)$.
\end{rem}

\begin{rem}\label{rem:connection-with-work-of-cowen}
In Cowen and MacCluer \cite{Cowen:1994cw}, the spectra of composition operators on weighted Hardy spaces are investigated.  In one particular result, the authors prove that when $\Phi : \D \to \D$ is an univalent, holomorphic map that is not an automorphism and satisfies $\Phi(\D) \subset \D$ and $\Phi(0) = 0$, then the spectrum of the associated Koopman operator on (the classical Hardy space) $H^{2}(\D)$ is 
	\begin{equation}\label{eq:cowen-spectra}
	\sigma(U_{\Phi}) = \setc{ \lam }{ \abs{\lam} \leq \tilde{\rho}} \cup \setc{ \Phi'(0)^{k} }{ k=1,2,\dots} \cup \set{1},
	\end{equation}
where $\tilde{\rho}$ is the essential spectral radius of $\Phi$ and $\Phi'(0) \neq 0$ is the derivative of $\Phi$ at $0$.  Note that $\Phi'(0)$ is the eigenvalue of the linearized dynamics around the fixed point at 0.  The conditions put on $\Phi$ merely guarantee that 0 is a globally attracting (in the disc) fixed point.

Clearly, the last two sets of \eqref{eq:cowen-spectra} are equivalent to \eqref{eq:linearized-spectrum} and merely come from the linearized dynamics. The linear dynamics $w \mapsto \Phi'(0) w$ and the nonlinear dynamics $z \mapsto \Phi(z)$ are topologically conjugate in the disc under some diffeomorphism $g : \D \to \D$; i.e., $g\circ \Phi = \Phi'(0) \circ g$.  This paper defines the pullback space \eqref{eq:d1-pullback-space}, on which the Koopman operator corresponding to the nonlinear dynamics $\Phi$ is spectral, whereas Cowen and MacCluer consider the composition operator acting on $H^{2}(\D)$ and as a result obtain an additional term in the spectrum, namely $\setc{ \lam }{ \abs{\lam} \leq \tilde{\rho}}$.  Using the same argument as in the previous remark, if the conjugacy is biholomorphic in the disc, then $\F \equiv H^{2}(\D)$.  As we have shown, the composition operator has only point spectra.  This is a sharpening of the result by Cowen and MacCluer when applied to this specific class of maps.

Even if the conjugacy is not biholomorphic and we consider the composition operator $U_{\Phi}$ on $H^{2}(\D)$ rather than the pullback space $\F = \bhardy \circ g \circ h$, we can apply the GLA theorems in this paper to construct the eigenfunctions as long as $\tilde{\rho} < \Phi'(0)$.  In this case, $U_{\Phi}$ has a dominating point spectrum (take $R = \tilde{\rho}$ in definition \ref{defn:dominating-point-spectrum}).

\end{rem}

\subsubsection{Asymptotically stable limit cycles in $\mathbb{R}^{2}$.}
Consider a stable limit cycle in the plane, topologically conjugate in a neighborhood of the limit cycle to the linearized system
	\begin{equation}
	\begin{aligned}\label{eq:linear-limit-cycle}
	\dot{x} &= \rho(s) x \\
	\dot{s} &= 1 
	\end{aligned}
	\end{equation}
where $x \in\R$, $s \in S^{1} = \Z / 2\pi$, and $\rho(s) \in \R$ is $2\pi$ periodic.  Letting $\Phi_{t} : \R^{2} \to \R^{2}$ be the flow map of the system, the continuous time Koopman semigroup is formally defined as $U(t)f = f \circ \Phi_{t}$.  Eigenfunctions of the semigroup are functions $\phi$ taking the form $U(t) \phi = \lam^{t} \phi$.  We call $\lam \in \C$ an eigenvalue of the Koopman semigroup.

It was shown in \cite{Lan:2011wi} that the Koopman semigroup associated with the dynamical system \eqref{eq:linear-limit-cycle} has eigenfunctions of the form,
	\begin{equation}
	\begin{aligned}
	g_{m}(x,s) &= x^{m} e^{ -m  \int_{0}^{s} \left(\rho(\bar{s}) - \rho^{*} \right) d\bar{s}  } , \quad (m \in \N_{0})\\
	h_{n}(x,s) &= e^{i n s} , \quad (n \in \Z)
	\end{aligned}
	\end{equation}
with eigenvalues 
	\begin{align*}
	\lam_{m} &= e^{m\rho^{*}}, \text{ and} \\
	\mu_{n} &= e^{in},
	\end{align*}
respectively, and where $\rho^{*} = (2\pi)^{-1} \int_{0}^{2\pi} \rho(s) ds$.  We have that $\rho^{*} < 0$ since the limit cycle is asymptotically stable.

By the semigroup property of eigenfunctions, $b_{m,n}(x,s) = g_{m}(x,s) \cdot h_{n}(x,s)$ is an eigenfunction having eigenvalue $e^{(m\rho^{*}+in)}$.  Let $\mscr V$ be the subspace given by the linear span of elements of the form $b_{m,n} = g_{m} \cdot h_{n}$;
	\begin{equation}\label{eq:sum-efunc-limit-cycle}
	\begin{aligned}
	\mscr V &:= \setc{ \sum_{k=1}^{K} a_{k} (g_{m_{k}}\cdot h_{n_{k}})  }{ K\in \N, a_{k} \in \C, m_{k} \in \N_{0}, n_{k} \in \Z }  \\
	&= \setc{\sum_{k=0}^{K} a_{k}  x^{k} e^{ -k  \int_{0}^{s} \left(\rho(\bar{s})  - \rho^{*} \right)d\bar{s}  }  e^{i n_{k} s} }{ K\in \N_{0}, n_{k} \in \Z, a_{k} \in \C} .
	\end{aligned}
	\end{equation}

We show that $\mscr V$ is contained in a space of polynomials over a normed unital commutative ring $\mscr R$ that is dense in $L^{2}(S^{1},\mu)$, where $\mu$ is the normalized Haar measure $d\mu = (2\pi)^{-1} ds$.  Elements of $\mscr V$ can be written in the form 
	\begin{equation}\label{eq:linV}
	\mscr V = \setc{ \sum_{k=0}^{K}  x^{k}  e^{ -k  \int_{0}^{s} \left(\rho(\bar{s})  - \rho^{*} \right)d\bar{s}  }  \sum_{\abs{n} \leq N_{k}} a_{k,n}e^{ins} }{ K \in \N_{0}, a_{k,n} \in \C, n \in \Z, N_{k} \in \N_{0} }.
	\end{equation}
Define $\mscr W$ as
	\begin{equation}\label{eq:pre-ring-W}
	\mscr W := \setc{ e^{ -m \int_{0}^{s} \left( \rho(\bar{s})  - \rho^{*} \right) d\bar{s} } \sum_{\abs{n} \leq N} a_{n} e^{ins}}{ m \in \N_{0}, N \in \N_{0}, a_{n} \in \C}
	\end{equation}
and $\mscr R$ as
	\begin{equation}\label{eq:S1-normed-ring}
	\mscr R := \lin \mscr W =  \setc{ \sum_{k=1}^{K} b_{k} w_{k}(s)}{ K\in \N, w_{k} \in \mscr W, b_{k} \in \C} .
	\end{equation}

\begin{lem}
$\mscr R$ is a normed unital commutative ring under pointwise products of functions with norm $\norm{ f }_{2} = (\frac{1}{2\pi} \int_{0}^{2\pi} \abs{ f(s) }^{2} ds)^{1/2}$.
\end{lem}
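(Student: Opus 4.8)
The plan is to verify the ring and norm axioms directly, the only substantive points being that the generators descend to well-defined functions on $S^{1}$ and that $\mscr R$ is closed under pointwise multiplication; everything concerning the norm reduces to the standard $L^{2}$ facts. First I would fix the abbreviation $P(s) := \int_{0}^{s}(\rho(\bar s) - \rho^{*})\,d\bar s$ and record the crucial periodicity observation: since $\rho^{*} = (2\pi)^{-1}\int_{0}^{2\pi}\rho(s)\,ds$ is the mean of $\rho$, we have $P(2\pi) = \int_{0}^{2\pi}\rho\,d\bar s - 2\pi\rho^{*} = 0$, so $P$ is $2\pi$-periodic. Consequently each $e^{-mP(s)}$ ($m\in\N_{0}$) is a genuine function on $S^{1}$, and as $\rho$ is integrable, $P$ is continuous and bounded on $[0,2\pi]$, so $e^{-mP(s)}$ is continuous and bounded. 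Each generator $w = e^{-mP(s)}\sum_{\abs{n}\le N}a_{n}e^{ins}$ of $\mscr W$ is therefore a bounded continuous function on $S^{1}$, as is every finite linear combination; in particular every element of $\mscr R=\lin\mscr W$ lies in $L^{2}(S^{1},\mu)$, so $\norm{\cdot}_{2}$ is finite on $\mscr R$.

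For the norm axioms, nonnegativity, homogeneity, and the triangle inequality are the standard $L^{2}$ properties, and definiteness holds because elements of $\mscr R$ are continuous, so $\norm{f}_{2}=0$ forces $f\equiv 0$. Thus $\norm{\cdot}_{2}$ is a bona fide norm and $\mscr R$ is a normed $\C$-vector space, trivially closed under addition and scalar multiplication.

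The central step is closure under pointwise products. For two generators $w_{i}=e^{-m_{i}P(s)}q_{i}(s)$, $i=1,2$, with $q_{i}$ trigonometric polynomials, additivity of the exponent gives
\[
w_{1}w_{2}=e^{-(m_{1}+m_{2})P(s)}\,q_{1}(s)q_{2}(s),
\]
and since $q_{1}q_{2}$ is again a trigonometric polynomial and $m_{1}+m_{2}\in\N_{0}$, we have $w_{1}w_{2}\in\mscr W$. By bilinearity the product of two elements of $\mscr R=\lin\mscr W$ is a finite linear combination of such products of generators, hence again lies in $\mscr R$. Commutativity is inherited from pointwise multiplication of functions, and the constant function $1$ (the generator with $m=0$, $N=0$, $a_{0}=1$) is the multiplicative unit, so $\mscr R$ is a unital commutative ring carrying the $L^{2}$ norm.

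I expect the only points requiring care to be the two observations underpinning multiplicative closure: the periodicity $P(2\pi)=0$, without which $e^{-mP}$ would not descend to $S^{1}$ at all, and the fact that the exponents add while trigonometric polynomials form a multiplicatively closed set. I would also flag an interpretive subtlety, since the $L^{2}$ norm is \emph{not} submultiplicative here (for instance $\norm{\sqrt2\cos s}_{2}=1$ while $\norm{2\cos^{2}s}_{2}=\sqrt{3/2}>1$); accordingly ``normed ring'' must be read as a ring equipped with a compatible vector-space norm rather than one satisfying $\norm{fg}\le\norm{f}\norm{g}$, which is consistent with the later use of $\mscr R$ only as a dense normed subspace of $L^{2}(S^{1},\mu)$.
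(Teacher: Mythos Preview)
Your proof is correct and follows essentially the same route as the paper: both arguments identify the unit, verify that products of generators $w_{1}w_{2}=e^{-(m_{1}+m_{2})P}q_{1}q_{2}$ lie back in $\mscr W$, extend bilinearly to $\mscr R$, and use continuity on the compact interval to obtain finiteness of the norm. Your version is in fact more thorough than the paper's in two respects: you explicitly check the periodicity $P(2\pi)=0$ needed for the generators to descend to $S^{1}$, and you correctly flag that the $L^{2}$ norm is not submultiplicative, so that ``normed ring'' here must be read as a ring carrying a vector-space norm rather than a Banach-algebra norm --- a point the paper leaves implicit.
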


\begin{proof}
The unit function $\mbf 1$, taking values $\mbf 1(s) = 1$, is contained in $\mscr W$ and has norm $\norm{ \mbf 1}_{2} = 1$; take $m = 0$ and $a_{n} = 0$ for $\abs{n}\neq 0$ and $a_{0} = 1$ in \eqref{eq:pre-ring-W}.  Similarly, for any $c \in \C$, $c\mbf 1$ is in $\mscr W$ and has finite norm.  This implies that the constant functions are in $\mscr R$.  Clearly, products of elements in $\mscr W$ commute.  This implies that products of elements from $\mscr R$ commute.  We must show that these products are also in $\mscr R$ and have finite norm.  Let $w_{j}$ and $w_{k}$ be in $\mscr W$.  It is a straightforward calculation to show that $(a_{j} w_{j}(s))( b_{k} w_{k}(s))$ is in $\mscr W$ for any $a_{j}, b_{k} \in \C$.  It follows that $\mscr R$ is closed under pointwise products.  Now let $g(s)$ be in $\mscr R$.  This function is a finite linear combinations of elements of $\mscr W$.  But elements of $\mscr W$ are continuous functions on the compact interval $[0, 2\pi]$.  Consequently, $g(s)$ is also a continuous function on the compact interval.  Therefore, there is some constant $C_{g}$ such that $\sup_{0 \leq s \leq 2\pi} \abs{g(s)} \leq C_{g}$.  Thus $\norm{ g }_{2} \leq C_{g}$.  Now, let $h(s)$ be another function in $\mscr R$.  We have shown that $g(s)\cdot h(s)$ is in $\mscr R$.  Furthermore, $g(s)h(s)$ is a continuous function on the compact interval $[0,2\pi]$.  Therefore, $\sup \abs{g(s)h(s)} \leq C_{gh}$ for $s \in [0,2\pi]$ and the product also has finite norm.
\end{proof}

\begin{lem}
$\mscr W$ is dense in $L^{2}(S^{1},\mu)$.  Consequently, $\mscr R$ is dense in $L^2(S^{1},\mu)$.
\end{lem}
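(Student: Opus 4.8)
The plan is to exploit the fact that the index $m$ in the definition \eqref{eq:pre-ring-W} of $\mscr W$ is allowed to be $0$, which collapses the exponential weight $e^{-m\int_{0}^{s}(\rho(\bar s)-\rho^{*})\,d\bar s}$ to the constant function $\mbf 1$. First I would observe that, setting $m=0$ in \eqref{eq:pre-ring-W}, every trigonometric polynomial $\sum_{\abs{n}\leq N} a_{n} e^{ins}$ (with $N \in \N_{0}$ and $a_{n}\in\C$) lies in $\mscr W$. Hence $\mscr W$ contains the full space of trigonometric polynomials on $S^{1}$; note also that each such element is continuous on the compact circle and therefore genuinely belongs to $L^{2}(S^{1},\mu)$.

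Next I would invoke the classical fact that the characters $\setc{ e^{ins} }{ n \in \Z }$ form a complete orthonormal system in $L^{2}(S^{1},\mu)$ when $\mu$ is normalized Haar measure $d\mu = (2\pi)^{-1}ds$; equivalently, their finite linear combinations --- the trigonometric polynomials --- are dense in $L^{2}(S^{1},\mu)$. This is standard Fourier theory, or alternatively a consequence of the Stone--Weierstrass theorem applied to $C(S^{1})$ together with the density of $C(S^{1})$ in $L^{2}(S^{1},\mu)$. Since a dense subset of $L^{2}(S^{1},\mu)$ is contained in $\mscr W$, it follows immediately that $\mscr W$ is dense in $L^{2}(S^{1},\mu)$.

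Finally, because $\mscr R = \lin \mscr W$ contains $\mscr W$ as a subset, the $L^{2}$-closure of $\mscr R$ contains the closure of $\mscr W$, which is all of $L^{2}(S^{1},\mu)$. Therefore $\mscr R$ is dense as well, yielding the ``consequently'' clause of the statement.

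There is essentially no substantive obstacle here: the only point requiring a moment's care is confirming that $m = 0$ is genuinely admissible in \eqref{eq:pre-ring-W}, which it is since $m$ ranges over $\N_{0}$, so that the weight factor can be made trivial and the trigonometric polynomials are recovered exactly as a subset of $\mscr W$. Everything else is a direct appeal to completeness of the Fourier basis in $L^{2}(S^{1},\mu)$, and the passage from $\mscr W$ to $\mscr R$ is immediate from the inclusion $\mscr W \subset \mscr R$.
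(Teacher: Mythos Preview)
Your proof is correct for the lemma as stated, and is in fact more direct than the paper's argument. You simply observe that $m=0$ is admissible in \eqref{eq:pre-ring-W}, so $\mscr W$ already contains every trigonometric polynomial, and density follows immediately from completeness of the Fourier basis.

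The paper takes a different and ostensibly more laborious route: it fixes an arbitrary $k \in \N_{0}$, writes $\xi(s) = \int_{0}^{s}(\rho(\bar s)-\rho^{*})\,d\bar s$, notes that $e^{\pm k\xi}$ is continuous, bounded, and bounded away from zero, and then for any $f \in L^{2}$ approximates $e^{k\xi}f$ by a trigonometric polynomial $p$ so that $e^{-k\xi}p$ approximates $f$. This shows the stronger fact that for \emph{each fixed} $k$ the set $\{\, e^{-k\xi(s)}\,p(s) : p \text{ a trig.\ polynomial}\,\}$ is dense in $L^{2}(S^{1},\mu)$. That extra strength is not needed for the lemma itself, but it is exactly what underlies the subsequent assertion that $\mscr V$ is dense in $H^{2}_{L^{2}}$: in the representation \eqref{eq:linV}, the coefficient of $x^{k}$ is forced to carry the weight $e^{-k\xi(s)}$, so one needs density of the weighted trigonometric polynomials at each fixed degree $k$, not merely at $k=0$. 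Your argument proves the lemma cleanly but does not by itself supply that later ingredient.
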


\begin{proof}
$\set{ e^{i n s} }_{n \in \Z}$ is an orthonormal basis for $L^{2}(S^{1},\mu)$.  Put $\xi(s) = \int_{0}^{s}\left( \rho(\bar{s})  - \rho^{*}\right) d\bar{s}$, for $s \in [0,2\pi]$.  Since $\xi(s)$ is a continuous function on a closed interval, it is bounded; $\abs{\xi(s)} \leq M$ for all $s$.  Therefore, $0 < e^{-k M} \leq  e^{-k\xi(s)} \leq e^{k M}$, where $c_{k}$ is a constant depending on $k$.  It follows that $e^{k\xi(s)}$ is a positive bounded function bounded away from 0.

Fix $f \in L^{2}(S^{1},\mu)$.  Since $e^{k\xi(s)}$ is a bounded function, it follows that $g_{k}(s) := e^{k\xi(s)} f(s)$ is in $L^{2}(S^{1},\mu)$.  For each $\eps > 0$, there is a trigonometric polynomial such that $\norm{ g_{k} - \sum_{\abs{n}\leq N} a_{n} e^{ins} }\\< e^{ k M} \eps$.  Therefore, $\norm{ f - e^{-k \xi(s)} \sum_{\abs{n}\leq N} a_{n} e^{ins} } < \eps$.
\end{proof}

\begin{lem}
$\mscr V = \mscr R[x]$.
\end{lem}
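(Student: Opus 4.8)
The plan is to establish the two set inclusions $\mscr V \subseteq \mscr R[x]$ and $\mscr R[x]\subseteq \mscr V$ by comparing generators term by term, once the correct dictionary between the eigenfunction description of $\mscr V$ and the polynomial description of $\mscr R[x]$ is fixed. Writing $\xi(s)=\int_0^s(\rho(\bar s)-\rho^*)\,d\bar s$, the organizing idea is to read the indeterminate $x$ of $\mscr R[x]$ together with its weight, i.e. as the fundamental transverse eigenfunction $g_1(x,s)=x\,e^{-\xi(s)}$, so that $x^k$ corresponds to $g_k=g_1^{\,k}=x^k e^{-k\xi(s)}$ while the coefficient ring carries only the dependence on the cyclic variable $s$. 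Under this reading each generator $b_{m,n}=g_m\cdot h_n=x^m e^{-m\xi(s)}e^{ins}$ of $\mscr V$ is the single monomial $x^m$ weighted by the coefficient $e^{ins}$.

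For $\mscr V\subseteq \mscr R[x]$ I would note directly that $b_{m,n}=\psi_m x^m$ with $\psi_m(s)=e^{-m\xi(s)}e^{ins}\in\mscr W\subseteq\mscr R$, so every generator is a monomial over $\mscr R$; taking finite $\C$-linear combinations and regrouping by the power of $x$ exhibits a typical element of $\mscr V$ as $\sum_k x^k\bigl(e^{-k\xi(s)}p_k(s)\bigr)$ with each $p_k$ a trigonometric polynomial, which is of the required form $\sum_k\psi_k x^k$ with $\psi_k\in\mscr R$.

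For the reverse inclusion I would take a general $\sum_{k=0}^{K}\psi_k x^k$ with $\psi_k\in\mscr R$ and, using $\mscr R=\lin\mscr W$, reduce to the case in which each $\psi_k$ is a single element $e^{-m\xi(s)}\sum_{|n|\le N}a_n e^{ins}$ of $\mscr W$; it then suffices to realize every monomial $e^{-m\xi(s)}e^{ins}x^k$ arising this way as a $\C$-combination of the generators $b_{m',n'}=x^{m'}e^{-m'\xi(s)}e^{in's}$. This is the step I expect to be the main obstacle: the generators bind the exponent of the weight $e^{-m'\xi(s)}$ to the power $m'$ of $x$, so the comparison only closes once the weight exponent and the power of the coordinate are made to agree. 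Pinning this down is the real content of the lemma — it amounts to checking that the factors $e^{-m\xi(s)}$ are absorbed into the monomials $g_1^{\,m}=x^m e^{-m\xi(s)}$ while the residual $s$-dependence is a trigonometric polynomial, i.e. an element of the subring that the preceding lemma shows is dense in $L^2(S^1,\mu)$. Granting this alignment, $\psi_k x^k=\sum_n a_{k,n}\,b_{k,n}\in\mscr V$, and combining the two inclusions yields the stated equality $\mscr V=\mscr R[x]$.
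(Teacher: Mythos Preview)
The paper's own proof is a single sentence: ``This follows directly from the definitions of $\mscr V$, $\mscr W$, and $\mscr R$.''  Your proposal is much more detailed and, in fact, correctly locates a genuine obstruction that the one-line proof glosses over.

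Your inclusion $\mscr V\subseteq \mscr R[x]$ is fine and matches the paper's intent.  The problem is the reverse inclusion.  You identify the crux accurately: in the generators $b_{m,n}=x^{m}e^{-m\xi(s)}e^{ins}$ of $\mscr V$, the exponent of the weight $e^{-m\xi(s)}$ is locked to the power of $x$, whereas in a general element $\sum_{k}\psi_{k}x^{k}$ of $\mscr R[x]$ the coefficient $\psi_{k}\in\mscr R=\lin\mscr W$ may involve weights $e^{-m\xi(s)}$ with $m\neq k$.  But then you write ``granting this alignment'' and move on.  That step cannot be granted: for generic $\rho$ the alignment simply fails.  Concretely, $x\cdot e^{-2\xi(s)}$ lies in $\mscr R[x]$ (take $k=1$, $\psi_{1}=e^{-2\xi(s)}\in\mscr W\subseteq\mscr R$), but for it to lie in $\mscr V$ one would need $e^{-2\xi(s)}=e^{-\xi(s)}p(s)$ with $p$ a trigonometric polynomial, i.e.\ $e^{-\xi(s)}$ itself a trigonometric polynomial.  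For, say, $\rho(s)=-1+\cos s$ (so $\xi(s)=\sin s$) this is false.  Hence $\mscr R[x]\subseteq\mscr V$ does not hold in general, and neither your argument nor the paper's one-liner establishes it.

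In short, your instinct that this is the ``main obstacle'' is exactly right, but it is an obstacle because the stated equality is too strong, not because a clever argument is missing.  What the subsequent development actually uses is only that $\mscr V\subseteq\mscr R[x]$ and that $\mscr V$ is dense in the completion $H^{2}_{L^{2}}$; the latter follows from the density lemma just above (which shows that for each fixed $k$ the functions $e^{-k\xi(s)}\cdot(\text{trig.\ poly.})$ are dense in $L^{2}(S^{1},\mu)$).  Your write-up would be improved by proving the inclusion cleanly and then noting that the reverse inclusion need not hold as written, rather than ``granting'' a step that does not go through.
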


\begin{proof}
This follows directly from the definitions of $\mscr V$, $\mscr W$, and $\mscr R$.
\end{proof}

Completing $\mscr R[x]$ under the polynomial norm \eqref{eq:polynomial-norm} gives the space consisting of elements of the form $\sum_{k=0}^{\infty} \psi_{k} x^{k}$ where $\psi_{k} \in L^{2}(S^{1},\mu)$, and $\sum_{k=0}^{\infty} \twonorm{ \psi_{k}}^{2} < \infty$.  These are well-defined functions for $\abs{x} < 1$.  The completion $\F := H_{L^{2}}^{2}$ is not a ring.  Furthermore, $\mscr V$ is dense in $H^{2}_{L^{2}}$.

\subsubsection{Spectrum and spectral measure for the Koopman semigroup on $H_{L^{2}}^{2}$.}
The linear space $\mscr V$ is generated by products of eigenfunctions of the Koopman semigroup and is dense in $H_{L^{2}}^{2}$.  Eigenvalues of these eigenfunctions are of the form 
	\begin{equation}\label{eq:product-evalues}
	\gamma_{k,n} = e^{(k \rho^{*} + i n)}, \quad (k\in \N_{0}, \forall n \in \Z) .
	\end{equation}
The set $\set{ \gamma_{k,n}}$ is dense on isolated rings in the complex plane.  Since $\rho^{*} < 1$, then $\abs{\gamma_{k,n}} \leq 1$.  It is easy to check that $\norm{ U(t) f }_{\mscr R,2} \leq \norm{f}_{\mscr R,2}$ for all $f \in \mscr V$ and $t\geq 0$.  Since $\mscr V$ is dense in $H_{L^{2}}^{2}$, we can extend $U(t)$ to $H_{L^{2}}^{2}$ by continuity.

Let $\mscr V_{K}$ be the subspace of $\mscr V$ consisting of all elements for which the degree of $x$ is at most $K$.  That is, in the definition \eqref{eq:linV} of $\mscr V$ fix $K$ rather than allowing it to run over all values of $\N_{0}$.

\begin{lem}
The Koopman semigroup leaves $\mscr V_{K}$ invariant.
\end{lem}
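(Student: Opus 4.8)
The plan is to exploit that, by construction, every element of $\mscr V_{K}$ is a finite linear combination of the joint eigenfunctions $b_{m,n} = g_{m}\cdot h_{n}$ of the Koopman semigroup whose $x$-degree satisfies $m \leq K$, and that $U(t)$ acts diagonally on these eigenfunctions. Since $U(t)$ is linear, it will send such a combination to another combination of the \emph{same} eigenfunctions, and because each $b_{m,n}$ is merely rescaled by its eigenvalue, the $x$-degree of every term — and hence the maximal degree $K$ — is preserved.

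Concretely, write $\xi(s) = \int_{0}^{s}(\rho(\bar s) - \rho^{*})\,d\bar s$, so that a generic element of $\mscr V_{K}$ (from \eqref{eq:linV} with $K$ held fixed) reads

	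\begin{equation*}
	f(x,s) = \sum_{k=0}^{K} x^{k} e^{-k\xi(s)} \sum_{\abs{n}\leq N_{k}} a_{k,n} e^{ins} = \sum_{k=0}^{K}\sum_{\abs{n}\leq N_{k}} a_{k,n}\, b_{k,n}(x,s),
	\end{equation*}

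a finite double sum. First I would recall from the eigenfunction property stated above, together with \eqref{eq:product-evalues}, that $U(t) b_{k,n} = \gamma_{k,n}^{t}\, b_{k,n} = e^{t(k\rho^{*}+in)} b_{k,n}$. Applying $U(t)$ term by term, which is legitimate since the sum is finite and $U(t)$ is linear, then yields

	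\begin{equation*}
	U(t) f = \sum_{k=0}^{K}\sum_{\abs{n}\leq N_{k}} \tilde a_{k,n}\, b_{k,n}(x,s), \qquad \tilde a_{k,n} := e^{t(k\rho^{*}+in)} a_{k,n} \in \C .
	\end{equation*}

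Collecting terms according to the power of $x$, this is again of the form \eqref{eq:linV} with the same $K$ and the same truncation indices $N_{k}$, only with the scalar coefficients $a_{k,n}$ replaced by $\tilde a_{k,n}$; hence $U(t)f \in \mscr V_{K}$.

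There is no real obstacle here: the only point to verify carefully is that applying $U(t)$ neither raises the degree in $x$ nor introduces new Fourier modes in $s$, so that the finite double-sum structure defining $\mscr V_{K}$ is retained. Both facts are immediate once we know that $U(t)$ scales each eigenfunction $b_{k,n}$ by a constant without mixing it with eigenfunctions of a different index pair $(k,n)$, which is exactly the content of the eigenvalue relation. The well-definedness of $U(t)$ on $\mscr V$, and its contractivity in $\norm{\cdot}_{\mscr R,2}$, were noted just above, so nothing further is required.
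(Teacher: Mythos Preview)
Your proof is correct and takes the same approach as the paper: $\mscr V_{K}$ is spanned by eigenfunctions $b_{k,n}$ with $k\leq K$, and $U(t)$ acts diagonally on these, so the space is invariant. The paper's own proof is the one-line observation that $\mscr V_{K}$ is generated by eigenfunctions of the Koopman operator; you have simply unpacked this in detail.
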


\begin{proof}
$\mscr V_{K}$ is generated by eigenfunctions of the Koopman operator.
\end{proof}

\begin{prop}
The Koopman semigroup corresponding to the dynamical system \eqref{eq:linear-limit-cycle} is a spectral operator of scalar type on $\F = H_{L^{2}}^{2}$.
\end{prop}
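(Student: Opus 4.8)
The plan is to realize $\F = H_{L^{2}}^{2}$ as a separable Hilbert space carrying an orthonormal basis of Koopman eigenfunctions, and then to read off an equicontinuous spectral measure triple $(\mscr S,\Sigma,\mu)$ in the sense of Definition \ref{defn:spectral-measure} against which each $U(t)$ is a spectral element. First I would note that the $\ell^{2}$ polynomial norm \eqref{eq:polynomial-norm} with coefficient space $L^{2}(S^{1},\mu)$ is induced by an inner product, so $\F$ is a Hilbert space. Writing each coefficient $\psi_{k}\in L^{2}(S^{1},\mu)$ in the Fourier basis $\set{e^{ins}}_{n\in\Z}$ and recalling that $b_{m,n}=g_{m}\cdot h_{n}=x^{m}e^{-m\xi(s)}e^{ins}$, with $\xi(s)=\int_{0}^{s}(\rho(\bar s)-\rho^{*})\,d\bar s$, the assignment $\sum_{k}\psi_{k}x^{k}\mapsto (a_{k,n})$ is an isometric isomorphism of $\F$ onto $\ell^{2}(\N_{0}\times\Z)$. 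Hence $\set{b_{m,n}}_{(m,n)\in\N_{0}\times\Z}$ is an orthonormal basis of $\F$ consisting of eigenfunctions, with $U(t)b_{m,n}=e^{t(m\rho^{*}+in)}b_{m,n}$ by the semigroup property of eigenfunctions.

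Next I would build the spectral measure directly on the index set. Take $\mscr S=\N_{0}\times\Z$, let $\Sigma$ be its power set, and define $\mu(A)=\sum_{(m,n)\in A}P_{m,n}$, where $P_{m,n}$ is the orthogonal projection onto $\C b_{m,n}$. The three defining properties are then immediate: $\mu(\mscr S)=I$ because $\set{b_{m,n}}$ is an orthonormal basis; multiplicativity $\mu(A_{1}\cap A_{2})=\mu(A_{1})\mu(A_{2})$ follows from $P_{m,n}P_{m',n'}=\delta_{(m,n),(m',n')}P_{m,n}$; and countable additivity in the weak (indeed strong) operator topology is the usual convergence of partial sums of an orthogonal projection family. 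Since each $\mu(A)$ is an orthogonal projection, $\norm{\mu(A)}\leq 1$, so the triple is equicontinuous, and $\F$ being a Hilbert space is sequentially complete; thus the spectral integral of bounded $\Sigma$-measurable functions is available.

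Then, for each fixed $t\geq 0$, I would set $f_{t}(m,n)=e^{t(m\rho^{*}+in)}$, which recovers the eigenvalues \eqref{eq:product-evalues}. Because $\rho^{*}<0$ and $m\geq 0$ we have $\abs{f_{t}(m,n)}=e^{tm\rho^{*}}\leq 1$, so $f_{t}$ is a bounded $\Sigma$-measurable function and $U_{f_{t}}:=\int_{\mscr S}f_{t}\,d\mu=\sum_{(m,n)}e^{t(m\rho^{*}+in)}P_{m,n}$ is a spectral element of the spectral algebra $\mscr A$ associated with $(\mscr S,\Sigma,\mu)$. It remains to identify $U_{f_{t}}$ with the Koopman operator $U(t)$: both agree on the dense subspace $\mscr V=\lin\set{b_{m,n}}$ by the eigenfunction relations, and both are bounded (we already have $\norm{U(t)}\leq 1$ and $\norm{U_{f_{t}}}\leq 1$), so they coincide on $\F$ by continuity. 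Performing the change of measure $E_{t}=\mu\circ f_{t}^{-1}$ as in \eqref{spectral-operator} yields $U(t)=\int_{\C}z\,E_{t}(dz)$, exhibiting each $U(t)$ as a scalar-type spectral operator whose support is its spectrum; since the single measure $\mu$ serves every $t$, the whole semigroup is spectral of scalar type.

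The main obstacle I anticipate is the bookkeeping in the third step: confirming that the spectral-integral operator $\int_{\mscr S}f_{t}\,d\mu$, a priori defined only through the abstract calculus of \cite{Walsh:1965vn}, really is the strong-operator limit $\sum_{(m,n)}e^{t(m\rho^{*}+in)}P_{m,n}$, and that this limit is exactly the continuous extension of $U(t)$ from $\mscr V$. In the Hilbert-space setting this reduces to the estimate $\norm{\sum_{(m,n)\notin F}e^{t(m\rho^{*}+in)}P_{m,n}\psi}^{2}=\sum_{(m,n)\notin F}e^{2tm\rho^{*}}\abs{\inner{\psi}{b_{m,n}}}^{2}\leq\sum_{(m,n)\notin F}\abs{\inner{\psi}{b_{m,n}}}^{2}\to 0$ as the finite set $F$ exhausts $\mscr S$, which is routine; the only genuine care needed is that the abstract and concrete notions of the integral agree, for which one checks agreement on simple functions (finite sums of the $P_{m,n}$) and passes to the limit using the equicontinuity established in the second step.
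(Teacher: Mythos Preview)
Your argument has a genuine gap in the first step: the family $\set{b_{m,n}}$ is \emph{not} an orthonormal basis of $\F=H^{2}_{L^{2}}$ unless $\rho$ is constant. The isometric isomorphism you write down, obtained by Fourier-expanding each coefficient $\psi_{k}$ as $\sum_{n}a_{k,n}e^{ins}$, sends the standard basis vectors of $\ell^{2}(\N_{0}\times\Z)$ to $\set{x^{k}e^{ins}}$, not to $b_{k,n}=x^{k}e^{-k\xi(s)}e^{ins}$. Concretely, for fixed $m\geq 1$ and $n\neq n'$ one computes
\[
\inner{b_{m,n}}{b_{m,n'}}_{\F}=\frac{1}{2\pi}\int_{0}^{2\pi}e^{-2m\xi(s)}e^{i(n-n')s}\,ds,
\]
which is the $(n'-n)$-th Fourier coefficient of the nonconstant function $e^{-2m\xi}$ and hence is nonzero in general. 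Consequently the $P_{m,n}$ are oblique rather than orthogonal projections, the bound $\norm{\mu(A)}\leq 1$ does not follow, and the equicontinuity of your spectral measure triple is not established by the argument given. Switching to the genuinely orthonormal system $\set{x^{k}e^{ins}}$ does not help, since those functions are not Koopman eigenfunctions when $\xi\not\equiv 0$.

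The paper's proof pursues essentially the same strategy---building the spectral projections from the eigenfunction expansion in the $b_{k,n}$---but formulates $E$ directly on $\mc B(\C)$ via the index set $I(D)=\setc{(k,n)}{\gamma_{k,n}\in D}$, and asserts the bound $\norm{E(D)f}_{\mscr R,2}\leq\norm{f}_{\mscr R,2}$ for $f\in\mscr V$ directly rather than deducing it from an orthonormality claim. Your abstract-triple formulation on $\mscr S=\N_{0}\times\Z$ is a clean repackaging, but to make it go through you would need to replace the false orthonormality assertion by a direct uniform bound on the coordinate projections associated with the (non-orthogonal) system $\set{b_{m,n}}$.
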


\begin{proof}
We show this by constructing explicitly the spectral measure $E$ so that $U(t) f = \int_{\C} z^{t} E(dz) f$ for $f \in H_{L^{2}}^{2}$.  Let $\mc B(\C)$ be the $\sigma$-algebra of Borel sets in the complex plane.  Let $D \in \mc B(\C)$ and define
	\begin{equation*}
	I(D) := \setc{ (k,n) \in \N_{0} \times \Z  }{ \gamma_{k,n} \in D},
	\end{equation*}
where $\gamma_{k,n}$ is defined in \eqref{eq:product-evalues}.  This is the set of all indices corresponding to the eigenvalues $\gamma_{k,n}$ that are contained in $D$.  Define the set function $E : \mc B(\C) \to \mscr L(\mscr V )$ as 
	\begin{equation}\label{eq:limit-cycle-spectral-measure}
	E(D)f = \sum_{ \substack{(k,n) \in I(D)\\ k\leq K, \abs{n} \leq N} } c_{k,n}(f) x^{k} e^{ -k \int_{0}^{s} \left( \rho(\bar{s}) - \rho^{*}\right)d\bar{s} }e^{i n s} ,
	\end{equation}
where
	\begin{equation*}
	f(x,s) = \sum_{k=0}^{K} \sum_{ \abs{n} \leq N } c_{k,n}(f) x^{k} e^{ -k \int_{0}^{s} \left( \rho(\bar{s}) - \rho^{*}\right)d\bar{s} }e^{i n s} 
	\end{equation*}
is the unique representation of $f \in \mscr V$.  The set function $E$ commutes with $U(t)$ for all $t$ and 
	\begin{equation*}
	\norm{ E(D) f }_{\mscr R, 2 }^{2} :=  \sum_{ \substack{(k,n) \in I(D)\\ k\leq K, \abs{n} \leq N} } \abs{c_{k,n}(f)}^{2} \twonorm{ e^{ -k \int_{0}^{s} \left( \rho(\bar{s}) - \rho^{*}\right)d\bar{s} }e^{i n s} }^{2} \leq \norm{ f }_{\mscr R, 2 }^{2}
	\end{equation*}
Therefore, $E$ is an equibounded set function commuting with $U(t)$.  It is easy to show that $E$ satisfies all the other properties of the spectral measure given in \eqref{defn:spectral-measure}.  Since $E(D)$ is a bounded linear operator on $\mscr V$ and $\mscr V$ is dense in $H^{2}_{L^{2}}$, it can be extended by continuity to all of $H_{L^{2}}^{2}$.
\end{proof}

\begin{rem}
The same pullback space of observables corresponding to bounded open sets in the basin of attraction can be constructed as was done for the attracting fixed point.  Theorem 2.6 of \cite{Lan:2012vw} guarantees the existence of the required topological conjugacy valid in the entire basin of attraction of the limit cycle for appropriately regular nonlinear maps $\Phi$ in $\R^{2}$ possessing a stable limit cycle.
\end{rem}


\section{Discussion and Conclusions}\label{sec:conclusions}
In the analysis of nonlinear dynamical systems, spectral analysis of the Koopman operator offers much insight into the system.  However, most of the spectral theory has dealt with the Koopman operator associated with a measure-preserving or non-dissipative system.  Many of the simplest examples of dynamical systems, and many practical systems, have dissipation or expansion present in them, and the traditional $L^{2}$-spectral theory of the measure-preserving case cannot handle these systems.  In order to fill this gap, this paper extended the theory to scalar-type spectral operators on locally convex linear topological spaces having a point spectrum that is not restricted to the unit circle.  Projections onto these eigenspaces can be recovered by Laplace averages of the operator.  When the point spectrum is contained in the unit circle, the Laplace averages of this paper reduce to the well-known Fourier averages used to compute projections for unitary operators in a Hilbert space.

The results in this paper, however, do not give a full spectral picture.  There are restrictions on the placement of both the eigenvalues and the continuous parts of the spectrum in the complex plane.  For example interleaving of the continuous and point spectrum cannot be immediately treated with the GLA theorems of this paper.   In particular, we must assume the point spectrum is concentrated on isolated circles in the complex plane and additionally the operator either has a dominating or minimal point spectrum.  Given some dynamical system, it is not clear a priori whether the corresponding Koopman operator satisfies these conditions and, if it does, what exactly a typical observable might look like.

These questions were answered here for finite-dimensional, dynamical systems possessing hyperbolic attracting fixed points in $\C^{d}$ and attracting limit cycles in $\R^{2}$.  When the dynamics are restricted to some bounded set in a basin of attraction for a fixed point, a space of observables can be constructed so that the spectrum of the Koopman operator possesses the properties assumed in this paper, namely that $\sigma$(U) has dominating or minimal point spectrum and eigenvalues are concentrated on isolated circles.  Formal eigenfunctions for the linearized system were used to generate a space of observables.  As an algebraic object, this space was a subspace of the space of polynomials over a normed unital commutative ring.  The indeterminates of these polynomials corresponded to coordinates corresponding to stable directions of the attractor while the coefficients took values in a normed ring consisting of observables supported on the attractor.  This polynomial space was completed using an $\ell^{2}$ polynomial norm to a Banach space that was identifiable with a space of observables defined on a bounded subset in the basin of attraction.  Observables for the topologically conjugate nonlinear system were merely the result of composing the space of observable for the Koopman operator of the linearized system with the conjugacy maps.  We conjecture that for any bounded attractor with a basin of attraction that can be properly ``coordinatized'', the natural space of observables for the Koopman operator will be a space of polynomials (and its completion) over a normed unital commutative ring, with the ring formed from observables supported on the attractor.

Numerical considerations were not treated in this paper.  Efficient numerical algorithms are needed to make the results of this paper useful in applied settings.  This will be the subject of future work.  To this end, we mention work that has been done on discrete Laplace transforms by Rokhlin \cite{Rokhlin:1988vk}, Strain \cite{Strain:1992jq}, and Anderson \cite{Andersson:2012ec}.  It may be that these algorithms can be leveraged into a package to compute the projections.  Speed and numerical stability will be primary concerns since these algorithms require good estimates of the eigenvalues.  Three other possible directions to pursue are the development of iterative methods using, for example, 
\begin{inparaenum}[(i)] \item Krylov subspace-type techniques and the methods of matrix-free linear algebra \cite{Trefethen:1997wg, Saad:2011tu}, 
\item using the conjugacy method to explicitly construct eigenfunctions and project an observable onto them, or
\item using polynomial approximations of indicator functions on the spectrum.  
\end{inparaenum}
The first technique is conceptually closer to Laplace averages in the sense that it would extract the most unstable modes first, whereas the second technique would allow the projection onto a particular eigenfunction at the cost of additional computational complexity in finding the eigenfunctions.  In general, the eigenfunctions are the roots of a nonlinear operator equation that can be solved using an operator version of the Newton method \cite{Katok:1997th}.  Furthermore, the discrete Laplace transforms and the matrix-free methods could be classified as a ``data-driven'' method, since, in principle, all that is required is a sequence of observations and not an explicit representation of the dynamical system.  The third technique rests on the operational calculus for spectral operators and that spectral projections are determined by integrating the spectral measure against indicator functions (see lemma \ref{eq:spectral-projections-lct}).  This final possibility, in principle, allows one to extract both the point spectrum and continuous parts of the spectrum even if the spectrum does not satisfy the conditions in this paper.  Furthermore, this is also a data driven technique as we only need iterations of an observable to compute this.  Questions revolve around best polynomial approximations to the indicator functions, which polynomial basis allows for stable and efficient computation of the approximating polynomial in addition to the parsimony of the approximation; i.e., for a fixed approximation error, which polynomial basis has an approximating polynomial satisfying the error bound with the polynomial having minimal degree?

In summary, this paper introduced a method for constructing eigenfunctions of a spectral operator that can be considered as an extension of mean ergodic theorems to dissipative or expanding systems; we showed certain basic examples of dynamical systems and spaces of observables on them that gave rise to spectral Koopman operators on which the GLA theorems could be applied; and, additionally, we showed that these spaces of observables were each identifiable with a completion of a space of polynomials over a normed unital commutative ring, with the ring elements being observables defined on the attractor and the ring being dense in separable, reflexive Banach space.  The observable spaces for the linearized dynamics took the form of a generalized Hardy space where instead of the coefficients being in $\C$, they took values in the Banach space.  The space of observables for the nonlinear dynamical system were constructed by composing the space of observables for the linearized system with the conjugacy map between the nonlinear and linear system.  The exact form of this pullback space is determined by the conjugacy map.

\bibliographystyle{amsplain}

\bibliography{gla}

\end{document}